\journal{}
\definecolor{shadecolor}{gray}{0.9}
\theoremstyle{plain}  
\newtheorem{theorem}{Theorem}
\newtheorem{lemma}[theorem]{Lemma} 
\newtheorem{proposition}[theorem]{Proposition} 
\newtheorem{corollary}[theorem]{Corollary} 
\theoremstyle{definition} 
\newtheorem{defn}{Definition} 
\newtheorem{example}{Example}
\theoremstyle{remark}
\newcommand{\prob}{\mathbb{P}}
\newcommand{\diff}{\,\mathrm{d}}
\newcommand{\E}{\mathbb{E}}
\newcommand{\one}{\mathbbm{1}}
\newcommand{\R}{\mathbb{R}}
\newcommand{\eps}{\varepsilon}
\renewcommand{\emptyset}{\varnothing}
\DeclareMathOperator{\ent}{ent}
\begin{document}

\begin{frontmatter}

\title{Bivariate distributions with ordered marginals}

\author[A1]{Sebastian Arnold}
\author[A1]{Ilya Molchanov}
\author[A1]{Johanna F.~Ziegel\corref{mycorrespondingauthor}}

\address[A1]{Institute of Mathematical Statistics and Actuarial Sciences, University of Bern, Switzerland}

\cortext[mycorrespondingauthor]{Corresponding author. Email address: \url{johanna.ziegel@stat.unibe.ch}}

\begin{abstract}
  This paper provides a characterization of all possible dependency
  structures between two stochastically ordered random variables. The
  answer is given in terms of copulas that are compatible
  with the stochastic order and the marginal distributions. The extremal values for Kendall's $\tau$
  and Spearman's $\rho$ for all these copulas are given in closed
  form. We also find an explicit form for the joint distribution with
  the maximal entropy. A multivariate extension and a generalization
  to random elements in partially ordered spaces are also provided.
\end{abstract}

\begin{keyword}
copula \sep
diagonal section \sep 
differential entropy\sep
nonparametric correlation \sep
stochastic order.
\MSC[2010] Primary 60E15 \sep
Secondary 62H20 
\end{keyword}

\end{frontmatter}

\section{Introduction}

Let $X_1$ and $X_2$ be two random variables, such that $X_1$ is
\emph{stochastically larger} than $X_2$. This means that $F_1(x) \le
F_2(x)$, $x \in \R$, for their cumulative distribution functions
(cdfs) $F_1$ and $F_2$, respectively. It is well known that this is
the case if and only if $X_1$ and $X_2$ can be realized on the same
probability space, so that $X_1\geq X_2$ almost surely. The objective
of this paper is to characterize all random vectors $(X_1, X_2)$ such
that $X_1$ and $X_2$ have given cdfs and $\prob(X_1\geq X_2)=1$.

As a first observation, we establish a representation of joint
distributions of ordered random variables as distributions of the
order statistics sampled from an exchangeable bivariate law. 

\begin{theorem}
  \label{thr:order}
  A random vector $(X_1,X_2)$ with marginal cdfs $F_1$ and $F_2$ satisfies $\prob(X_1\geq X_2)=1$ if and
  only if $X_1=\max(V_1,V_2)$ and $X_2=\min(V_1,V_2)$ for a random
  vector $(V_1,V_2)$ with exchangeable components and such that
  $\prob(V_1\leq x,V_2\leq x)=F_1(x)$ and 
  $\prob(V_i\leq x)=G(x)$ for all $x$, $i\in\{1,2\}$, where
  \begin{equation}
    \label{eq:2}
    G(x):=\frac{1}{2}(F_1(x)+F_2(x)), \quad x\in\R.
  \end{equation}
\end{theorem}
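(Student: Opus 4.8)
The plan is to prove both implications directly. The ``if'' direction is a matter of computing the marginal cdfs of $\max(V_1,V_2)$ and $\min(V_1,V_2)$ from the two distributional constraints on $(V_1,V_2)$; the ``only if'' direction is an explicit construction of $(V_1,V_2)$ from $(X_1,X_2)$ obtained by randomizing the order of the two coordinates with an independent fair coin.

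For the ``if'' direction I would assume $(V_1,V_2)$ is exchangeable with $\prob(V_1\le x,V_2\le x)=F_1(x)$ and $\prob(V_i\le x)=G(x)$ for $i\in\{1,2\}$, and set $X_1=\max(V_1,V_2)$, $X_2=\min(V_1,V_2)$, so that $X_1\ge X_2$ holds pointwise. Since $\{\max(V_1,V_2)\le x\}=\{V_1\le x,V_2\le x\}$, the cdf of $X_1$ is $F_1$. For $X_2$ I would use $\{\min(V_1,V_2)>x\}=\{V_1>x,V_2>x\}$ together with inclusion--exclusion and exchangeability: $\prob(V_1>x,V_2>x)=1-\prob(V_1\le x)-\prob(V_2\le x)+\prob(V_1\le x,V_2\le x)=1-2G(x)+F_1(x)=1-F_2(x)$ by the definition \eqref{eq:2} of $G$. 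Hence $X_2$ has cdf $F_2$.

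For the ``only if'' direction, let $(X_1,X_2)$ have marginals $F_1,F_2$ and satisfy $\prob(X_1\ge X_2)=1$. Enlarging the probability space if necessary, I would take $B$ with $\prob(B=0)=\prob(B=1)=\tfrac12$, independent of $(X_1,X_2)$, and put $(V_1,V_2)=(X_1,X_2)$ on $\{B=1\}$ and $(V_1,V_2)=(X_2,X_1)$ on $\{B=0\}$. Then for any Borel set $A\subseteq\R^2$ we have $\prob((V_1,V_2)\in A)=\tfrac12\prob((X_1,X_2)\in A)+\tfrac12\prob((X_2,X_1)\in A)$, which is symmetric in the two coordinates, so $(V_1,V_2)$ is exchangeable. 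Because $X_1\ge X_2$ almost surely, $\max(V_1,V_2)=X_1$ and $\min(V_1,V_2)=X_2$ almost surely, regardless of the value of $B$. Finally $\{V_1\le x,V_2\le x\}=\{\max(V_1,V_2)\le x\}=\{X_1\le x\}$ gives $\prob(V_1\le x,V_2\le x)=F_1(x)$, while $\prob(V_1\le x)=\tfrac12\prob(X_1\le x)+\tfrac12\prob(X_2\le x)=G(x)$, and $\prob(V_2\le x)=G(x)$ follows by exchangeability.

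I do not expect a genuinely hard step here; the statement is more of an observation than a computation. The only points that deserve a little care are the use of exchangeability to replace $\prob(V_2\le x)$ by $\prob(V_1\le x)$ in the inclusion--exclusion identity, and the fact that $G$ must be the \emph{average} of $F_1$ and $F_2$: this is exactly what makes $1-2G(x)+F_1(x)$ collapse to $1-F_2(x)$, so the choice of common marginal $G$ in \eqref{eq:2} is forced rather than arbitrary.
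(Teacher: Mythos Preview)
Your proof is correct and essentially identical to the paper's own argument: the paper also obtains $(V_1,V_2)$ as the random permutation of $(X_1,X_2)$ for the ``only if'' direction, and computes the marginals of the order statistics from the two distributional constraints for the ``if'' direction. The only cosmetic difference is that the paper computes $\prob(X_2\le x)$ by decomposing $\{\min(V_1,V_2)\le x\}$ into three disjoint events, whereas you compute $\prob(X_2>x)$ via inclusion--exclusion on the complement; both yield $2G(x)-F_1(x)=F_2(x)$.
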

\begin{proof}[\textnormal{\textbf{Proof}}]
  The vector $(V_1,V_2)$ obtained as the random permutation of
  $(X_1,X_2)$ is exchangeable and its marginal distributions are
  $G$. Furthermore,
  \begin{displaymath}
    \prob(V_1\leq x,V_2\leq x)
    =\prob(X_1\leq x,X_2\leq x)=\prob(X_1\leq x).
  \end{displaymath}

  Conversely, if $X_1$ and $X_2$ are order statistics from
  $(V_1,V_2)$, then $X_1\geq X_2$ a.s., and 
  \begin{align*}
    \prob(X_1\leq x)&=\prob(V_1\leq x,V_2\leq x)=F_1(x),\\
    \prob(X_2\leq x)&=\prob(V_1\leq x,V_2\leq x)
    +\prob(V_1\leq x,V_2>x)+\prob(V_1>x,V_2\leq x)\\
    &=2 G(x)-F_1(x)=F_2(x).  \qedhere
  \end{align*}
\end{proof}

This theorem complements already known results deriving the
distribution of order statistics from general multivariate laws, see
\cite{nav:spiz10,leb:dut14,diet:fuc:sch16}.

If the supports of $X_1$ and $X_2$ are disjoint intervals, then any dependency
structure between them is possible. Otherwise, restrictions are
necessary, e.g.\ $X_1$ and $X_2$ cannot be independent. In
Section~\ref{sec:char-stoch-order}, we give a complete description of
the joint distribution of $X_1$ and $X_2$.
This description is given in terms of copulas and their diagonal
sections. In Section~\ref{sec:maxim-indep-pairs}, we identify bounds on the joint
distribution of $(X_1,X_2)$ in terms of concordance ordering. Then in
Section~\ref{sec:nonp-corr-coeff}, we determine the smallest possible
nonparametric correlation coefficients. The joint distribution of
$(X_1,X_2)$ with the maximal entropy is found in
Section~\ref{sec:maxim-entr-solut}, followed by examples in
Section~\ref{sec:examples}. A multivariate extension and a generalization to random elements in
partially ordered spaces are presented in
Section~\ref{sec:generalizations}.

\section{Characterization of stochastically ordered copulas}
\label{sec:char-stoch-order}

A (bivariate) copula $C:[0,1]^2 \to [0,1]$ is the cdf of a
random vector $(U_1,U_2)$ with standard uniformly distributed
marginals. The joint cdf of each random vector $(X_1,X_2)$ can be
written as 
\[F(x_1,x_2)=C(F_1(x_1),F_2(x_2))\]
for a copula $C$ with
$F_1$ and $F_2$ being the marginal cdfs. A copula $C$ is called
\emph{symmetric} on a set $A \subset [0,1]$ if $C(u,v) = C(v,u)$ for
all $u,v \in A$. For $A = [0,1]$, symmetry of the copula is equivalent
to the pair $(U_1,U_2)$ being exchangeable.

The following theorem provides a characterization of all dependence
structures that are compatible with the stochastic ordering of the
marginals.

\begin{theorem}
  \label{thm:one}
  Let $(X_1,X_2)$ be a random vector with marginals $X_1$ and $X_2$
  having cdfs $F_1$ and $F_2$, respectively. Then $\prob(X_1\geq
  X_2)=1$ if and only if $F_1(x)\leq F_2(x)$ for all $x\in\R$ and the
  joint cdf of $(X_1,X_2)$ is given by
  \begin{equation}
    \label{eq:1}
    F(x_1,x_2)=
    \begin{cases}
      F_1(x_1), & x_1\leq x_2,\\
      2\tilde{C}(G(x_1),G(x_2))-F_1(x_2),& x_1 > x_2,
    \end{cases}
  \end{equation}
  for all $x_1,x_2\in\R$, where $G$ is given by \eqref{eq:2} and $\tilde{C}$
  is a symmetric copula on the range of $G$ such that
  \begin{equation}
    \label{eq:8}
    \tilde{C}(G(x),G(x))=F_1(x),\quad x\in\R.
  \end{equation}
\end{theorem}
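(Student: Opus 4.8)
My plan is to deduce both implications from Theorem~\ref{thr:order} by identifying $\tilde{C}$ with a copula of the exchangeable pair $(V_1,V_2)$ it produces, and then computing the joint cdf of the order statistics $\max(V_1,V_2)$ and $\min(V_1,V_2)$ through elementary manipulations of events. Before anything else I would record that \eqref{eq:1} is internally consistent: along the diagonal $x_1=x_2=x$ its second line equals $2\tilde{C}(G(x),G(x))-F_1(x)=F_1(x)$ by the diagonal-section condition \eqref{eq:8}, which matches the first line, so the right-hand side of \eqref{eq:1} is unambiguously defined.

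For the ``only if'' direction I would start from $\prob(X_1\geq X_2)=1$, which at once gives $F_1(x)=\prob(X_1\leq x)\leq\prob(X_2\leq x)=F_2(x)$. When $x_1\leq x_2$, the event $\{X_1\leq x_1\}$ is contained, up to a null set, in $\{X_2\leq x_2\}$ because $X_2\leq X_1\leq x_1\leq x_2$ a.s., so $F(x_1,x_2)=F_1(x_1)$. When $x_1>x_2$ I would invoke Theorem~\ref{thr:order} to obtain an exchangeable $(V_1,V_2)$ with $X_1=\max(V_1,V_2)$, $X_2=\min(V_1,V_2)$, marginals $G$ and $\prob(V_1\leq x,V_2\leq x)=F_1(x)$, and then pick a copula $\tilde{C}$ on the range of $G$ representing the joint cdf of $(V_1,V_2)$ as $(y_1,y_2)\mapsto\tilde{C}(G(y_1),G(y_2))$; exchangeability makes $\tilde{C}$ symmetric on the range of $G$, and \eqref{eq:8} follows on setting both arguments equal. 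The computation I would then carry out is
\[
  F(x_1,x_2)=\prob(X_2\leq x_2)-\prob(X_2\leq x_2,\;X_1>x_1),
\]
with $\prob(X_2\leq x_2)=F_2(x_2)=2G(x_2)-F_1(x_2)$ by \eqref{eq:2}, and with $\{\min(V_1,V_2)\leq x_2,\ \max(V_1,V_2)>x_1\}$ (valid for $x_2<x_1$) being the disjoint union of $\{V_1\leq x_2,V_2>x_1\}$ and $\{V_2\leq x_2,V_1>x_1\}$, each of probability $G(x_2)-\tilde{C}(G(x_1),G(x_2))$ by exchangeability and symmetry. Substituting yields the second line of \eqref{eq:1}.

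For the converse I would assume $F_1\leq F_2$ and that the joint cdf of $(X_1,X_2)$ has the form \eqref{eq:1} for a symmetric copula $\tilde{C}$ on the range of $G$ satisfying \eqref{eq:8}. I would extend $\tilde{C}$ to a (symmetric) copula on $[0,1]^2$, take $(U_1,U_2)$ with that cdf, and set $V_i=G^{-1}(U_i)$ for the quantile function $G^{-1}$; then each $V_i\sim G$, the cdf of $(V_1,V_2)$ is $(y_1,y_2)\mapsto\tilde{C}(G(y_1),G(y_2))$ and hence symmetric, so $(V_1,V_2)$ is exchangeable, and \eqref{eq:8} gives $\prob(V_1\leq x,V_2\leq x)=F_1(x)$. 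The converse part of Theorem~\ref{thr:order} then says $\max(V_1,V_2)$ and $\min(V_1,V_2)$ have cdfs $F_1$ and $F_2$ and are a.s.\ ordered, and by the computation from the necessity part their joint cdf is exactly \eqref{eq:1}; thus $(X_1,X_2)$ has that same law, whence $\prob(X_1\geq X_2)=1$.

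The step I expect to be the main obstacle is the bookkeeping around the range of $G$: when $F_1$ or $F_2$ has jumps or flat stretches, $\tilde{C}$ is pinned down only on the range of $G$, so I would need Sklar's theorem in its general (non-uniqueness) form in the necessity part and, in the sufficiency part, the fact that a symmetric partial copula on (the closure of) the range of a cdf extends to a symmetric copula on $[0,1]^2$. Everything else is the routine set-theoretic and arithmetic verification indicated above.
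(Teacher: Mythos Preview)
Your proposal is correct and follows essentially the same route as the paper: both directions are deduced from Theorem~\ref{thr:order} by identifying $\tilde{C}$ with a copula of the exchangeable pair $(V_1,V_2)$ and reading off the joint cdf of the order statistics. Your write-up is in fact more explicit than the paper's, which asserts without computation that the order statistics of $(V_1,V_2)$ have cdf \eqref{eq:1} and is silent on the Sklar/extension technicalities you flag regarding the range of $G$.
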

\begin{proof}[\textnormal{\textbf{Proof}}]
  Sufficiency. 
  Let $(V_1,V_2)$ be distributed according to the symmetric bivariate cdf
  $\tilde{C}(G(x_1),G(x_2))$. By construction, $V_1$ and $V_2$ are
  identically distributed with cdf $G$. Furthermore, $\prob(V_1 \le
  x,V_2 \le x) = \tilde{C}(G(x),G(x)) = F_1(x)$. The distribution of
  $(\tilde{X}_1,\tilde{X}_2)=(\max\{V_1,V_2\},\min\{V_1,V_2\})$ is given by \eqref{eq:1}, so
  sufficiency follows from Theorem~\ref{thr:order} because $\prob(\tilde{X}_1 \ge \tilde{X_2}) = \prob(X_1 \ge X_2)$.
  
  \noindent Necessity.
  Let $(V_1,V_2)$ be as in Theorem~\ref{thr:order}. Then, any copula
  $\tilde{C}$ of $(V_1,V_2)$ satisfies
  \begin{align*}
    \tilde{C}(G(x),G(y)) &= \prob(V_1 \le x, V_2 \le y) = \prob(V_1 \le y, V_2 \le x) =  \tilde{C}(G(y),G(x)),\\
    F_1(x) &= \prob(V_1 \le x, V_2 \le x) = \tilde{C}(G(x),G(x))
  \end{align*}
  for all $x,y \in \R$. 
\end{proof}

Diagonal sections of copulas, i.e. the functions that arise as
$\delta(t)=C(t,t)$, $t\in[0,1]$, for some copula $C$ are characterized
by the following properties, see \cite{DuranteMesiarETAL2005}.

\begin{defn}
  A function $\delta:[0,1] \to [0,1]$ is a \emph{diagonal section} if
  \begin{description}
  \item[\textnormal{(D1)}] $\delta(0)=0$, $\delta(1)=1$;
  \item[\textnormal{(D2)}] it is increasing;
  \item[\textnormal{(D3)}] $|\delta(t)-\delta(s)|\le 2|t-s|$ for
    all $t,s \in [0,1]$;
  \item[\textnormal{(D4)}] $\delta(t)\le t$ for all $t \in [0,1]$.
  \end{description}
\end{defn}

For an increasing function $F:\R \to [0,1]$, denote by 
\begin{displaymath}
  F^-(t):=\inf\{x:\; F(x)\geq t\} \in [-\infty,+\infty],\quad t\in[0,1],
\end{displaymath}
the generalized inverse of $F$, where $\inf \emptyset = +\infty$.  Note that
\begin{equation}\label{eq:15}
  F\circ F^-(t)=t
\end{equation}
for all $t$ from the range of $F$, see \cite[Proposition
2.3(4)]{EmbrechtsHofert2013}. For notational convenience, we set
$F(-\infty) = 0$, $F(\infty) = 1$.

The following result follows from the representation \eqref{eq:8} of
the diagonal section of the copula $\tilde{C}$ and identity \eqref{eq:15}.

\begin{corollary}
  \label{cor:one}
  Let $G$ be given by \eqref{eq:2}. The function 
  \begin{equation}
    \label{eq:3}
    D := F_1\circ G^-,
  \end{equation}
  given by the composition of $F_1$ and the generalized inverse of
  $G$, is the restriction of a diagonal section to the range of $G$.
\end{corollary}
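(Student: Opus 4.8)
The plan is to verify that $D := F_1 \circ G^-$ satisfies properties (D1)--(D4) at least on the range of $G$, i.e.\ that it extends to (or is the restriction of) a genuine diagonal section. The starting point is the identity \eqref{eq:8} from Theorem~\ref{thm:one}, namely $\tilde C(G(x),G(x)) = F_1(x)$ for all $x$. Composing with $G^-$ and using \eqref{eq:15} in the form $G \circ G^-(t) = t$ for $t$ in the range of $G$, we get $\tilde C(t,t) = F_1 \circ G^-(t) = D(t)$ for all such $t$. Since $t \mapsto \tilde C(t,t)$ is the diagonal section of the copula $\tilde C$ (restricted to the range of $G$), it automatically inherits (D1)--(D4) on that set, and so does $D$. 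So at the level of logical structure the corollary is almost immediate once the identification $D(t) = \tilde C(t,t)$ is made.

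To make this self-contained without invoking the existence of $\tilde C$, I would instead check (D1)--(D4) directly for $D = F_1 \circ G^-$ using only $F_1 \le F_2$ and $G = \tfrac12(F_1+F_2)$. First, (D1): as $t \downarrow 0$, $G^-(t) \to \inf\{x : G(x) > 0\}$, at which point $F_1 = 0$ as well (since $F_1 \le 2G$), giving $D(0) = 0$; at $t = 1$, $G^-(1)$ sits at the right end of the support where $F_1 = 1$, so $D(1) = 1$. Property (D2) is clear since $F_1$ and $G^-$ are both increasing, so their composition is increasing. Property (D4), $D(t) \le t$, follows because $F_1 \le G$ pointwise, hence $F_1 \circ G^- \le G \circ G^- = t$ on the range of $G$ by \eqref{eq:15}. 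The Lipschitz-type bound (D3) is the one genuinely requiring a computation: for $s < t$ in the range of $G$, write $x = G^-(s)$, $y = G^-(t)$, so $G(x) = s$, $G(y) = t$ and $x \le y$; then $D(t) - D(s) = F_1(y) - F_1(x)$, while $t - s = G(y) - G(x) = \tfrac12\bigl[(F_1(y)-F_1(x)) + (F_2(y)-F_2(x))\bigr] \ge \tfrac12 (F_1(y) - F_1(x))$, since $F_2$ is increasing. Hence $D(t) - D(s) \le 2(t-s)$, which is exactly (D3).

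The main obstacle, such as it is, is purely the bookkeeping around generalized inverses: $G^-$ is only a right-inverse of $G$ on the range of $G$ (not everywhere), so each step of the argument has to be carried out for $t$, $s$ in the range of $G$, and the endpoint behaviour at $0$ and $1$ needs the convention $F(\pm\infty) \in \{0,1\}$ together with the observation that $F_1$ and $G$ vanish (resp.\ equal $1$) on the same half-lines because $0 \le F_1 \le 2G$ and $F_2 \le 1$ force $2G - 1 \le F_1 \le 2G$. I would therefore present the argument as: (i) reduce to $t, s$ in the range of $G$; (ii) use \eqref{eq:15} to replace $G \circ G^-(t)$ by $t$; (iii) deduce (D2) and (D4) from monotonicity and $F_1 \le G$; (iv) deduce (D3) from the computation above using $F_2$ increasing; (v) handle (D1) via the support endpoints. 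Either route — the one-line reduction to $\tilde C(t,t)$, or the direct verification — establishes that $D$ is the restriction of a diagonal section to the range of $G$, which is the assertion of Corollary~\ref{cor:one}.
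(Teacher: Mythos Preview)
Your first route---composing \eqref{eq:8} with $G^-$ and invoking \eqref{eq:15} to identify $D(t)=\tilde C(t,t)$ on the range of $G$---is exactly the paper's argument, which is stated in a single sentence just before the corollary. So at the level of the actual proof, you have reproduced what the paper does.

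Your second route, the direct verification of (D1)--(D4) from $F_1\le G$ and the monotonicity of $F_2$, is a genuinely different and slightly more elementary argument: it avoids appealing to the existence of the copula $\tilde C$ from Theorem~\ref{thm:one} and works purely with the marginals. The computations you give for (D2), (D3), (D4) are correct. One small point worth flagging: verifying (D1)--(D4) only on the range of $G$ does not by itself prove that $D$ is the \emph{restriction of} a diagonal section defined on all of $[0,1]$; you still need a one-line extension step (e.g., linearly interpolate across each gap of the range of $G$ with slope in $[0,2]$, which is precisely what the paper later does with the function $\delta_G$ in the proof of Theorem~\ref{cor:KK05}). The paper's route sidesteps this because $\tilde C(t,t)$ is already defined on all of $[0,1]$.
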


We can also provide a converse to Corollary~\ref{cor:one}.

\begin{proposition}
  \label{prop:two}
  Let $\delta$ be a diagonal section. Then there are cdfs $F_1$ and
  $F_2$ such that $F_1 \le F_2$ and $\delta = F_1 \circ G^-$ with
  $G$ defined at \eqref{eq:2}.
\end{proposition}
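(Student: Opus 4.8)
The plan is to construct $F_1$ and $F_2$ explicitly from $\delta$ by reversing the relations $G = \tfrac12(F_1+F_2)$ and $\delta = F_1\circ G^-$. The natural guess is to let $G$ be \emph{any} continuous cdf whose range is all of $[0,1]$ — say the standard uniform cdf on $[0,1]$ — and then set $F_1 = \delta\circ G$ and $F_2 = 2G - F_1 = (2\,\id - \delta)\circ G$. With this choice $G^-$ is a genuine inverse on $[0,1]$, so $F_1\circ G^- = \delta$ on the range of $G$, which is exactly what is required; and $\tfrac12(F_1+F_2) = G$ by construction.

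The work then reduces to checking that $F_1$ and $F_2$ so defined are bona fide cdfs and that $F_1 \le F_2$. First I would verify $F_1 \le F_2$: this is equivalent to $F_1 \le G$, i.e. $\delta(t) \le t$ for $t$ in the range of $G$, which is property (D4). Monotonicity of $F_1 = \delta\circ G$ follows from (D2) and monotonicity of $G$; monotonicity of $F_2$ follows because $t\mapsto 2t - \delta(t)$ is increasing — this is where (D3) enters, since the Lipschitz bound $|\delta(t)-\delta(s)| \le 2|t-s|$ gives $\delta(t)-\delta(s) \le 2(t-s)$ for $t\ge s$, hence $2t-\delta(t) \ge 2s-\delta(s)$. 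The limit conditions $F_i(-\infty)=0$ and $F_i(+\infty)=1$ come from (D1): $\delta(0)=0$ and $\delta(1)=1$, together with $2\cdot 0 - 0 = 0$ and $2\cdot 1 - 1 = 1$. Right-continuity of $F_1$ and $F_2$ is inherited from that of $G$ provided $\delta$ is right-continuous; since $\delta$ is increasing and $1$-Lipschitz-like (actually $2$-Lipschitz by (D3)), it is in fact continuous, so no issue arises. One should also confirm $F_1, F_2$ take values in $[0,1]$, which is immediate from $0\le\delta(t)\le t\le 1$.

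The only mild subtlety — and the step I expect to need the most care — is the interplay with the generalized inverse and the identity \eqref{eq:15}: one wants $\delta = F_1\circ G^-$ to hold on the \emph{entire} range of $G$, not merely up to the closure. Choosing $G$ to be a strictly increasing continuous cdf onto $[0,1]$ (e.g. the logistic cdf, or uniform on an interval) sidesteps this entirely, because then $G^-$ is a true two-sided inverse and $G\circ G^- = \id$ on all of $[0,1]$; the composition $F_1\circ G^- = \delta\circ G\circ G^- = \delta$ is then exact. I would state the construction with such a $G$ to keep the argument clean. After that, the proposition follows by simply assembling the verified properties, and no further computation is required.
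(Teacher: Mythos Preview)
Your construction is correct and is genuinely different from the paper's. You fix $G$ to be a strictly increasing continuous cdf (say uniform on $[0,1]$) and then set $F_1=\delta\circ G$, $F_2=(2\,\id-\delta)\circ G$; the paper instead fixes $F_1=\id$ on $[0,1]$ and takes $G(x)=\delta^-(x+)$, the right-continuous version of the generalized inverse of $\delta$, so that $F_2(x)=2\delta^-(x+)-x$. Your route is cleaner: because $G$ is chosen invertible from the outset, the identity $F_1\circ G^-=\delta$ is immediate, whereas the paper must verify the nontrivial equality $G^-=\delta$ by tracking level sets of $\delta^-$. The paper's construction has the minor aesthetic advantage of producing a uniform $F_1$ (tying back to Example~\ref{ex:archi}), but for establishing the proposition your argument is more direct and uses the axioms (D1)--(D4) in exactly the same places.
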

\begin{proof}[\textnormal{\textbf{Proof}}] 
  We can extend $\delta$ to an increasing function on $\R$ with range
  $[0,1]$. Its generalized inverse $\delta^-$ is left-continuous. For
  $x \in [0,1)$, we define 
  \begin{displaymath}
    \delta^-(x+) = \lim_{y \downarrow x}
    \delta^-(y), \qquad \delta^-(1+) = 1.
  \end{displaymath}
  The function $x\mapsto \delta^-(x+)$ is increasing and
  right-continuous, $\delta(\delta^-(x+))=x$, and $\delta^-(x+) \ge \delta^-(x)$ for all $x \in
  [0,1]$. Let $x \le y$, $x, y \in [0,1]$. Then, by (D3),
\[
0 \le y - x = \delta(\delta^-(y+)) - \delta(\delta^-(x+))  \le 2(\delta^-(y+) - \delta^-(x+)).
\]
  Set $F_1(x) = x$ for $x \in [0,1]$ and $F_2(x) = 2\delta^-(x+) -
  x$ for $x \in [0,1]$. The function $F_2$ is a cdf by the above arguments. By (D4), we obtain $F_1 \le F_2$. We have $G(x) = \delta^-(x+)$, $x \in [0,1]$. It remains to be checked that $G^- = \delta$. The function $\delta$ is constant on $[\delta^-(x),\delta^-(x+)]$ for any $x \in [0,1]$. This implies, for any $t \in (0,1]$,
\begin{align*}
G^-(t) &= \inf\{x:\; G(x) \ge t\} = \inf\{x:\; \delta^-(x+) \ge t\} = \inf\{\delta(\delta^-(x+)):\; \delta^-(x+) \ge t\} \\ &= \delta(\inf\{\delta^-(x+):\; \delta^-(x+) \ge t\}) = \delta(t).\qedhere
\end{align*}
 \end{proof}

Equation~\eqref{eq:8} specifies the diagonal section $\tilde{C}(t,t)$
of the copula $\tilde{C}$ for all $t$ from the range of $G$. If both
$X_1$ and $X_2$ are non-atomic, then this range is $[0,1]$ and so the
diagonal section of $\tilde{C}$ is uniquely specified. 

\begin{example}
  \label{ex:archi}
  An Archimedean generator is a decreasing convex function
  $\psi:[0,\infty) \to [0,1]$ with $\psi(0)=1$ and $\lim_{x \to
    \infty}\psi(x) = 0$, see \cite[Theorem 6.3.2]{SchweizerSklar1983}. Note that we define an Archimedean generator following \cite{McNeilNeslehova2009}.
  For the Archimedean copula $C(u,v) = \psi(\psi^-(v) +
  \psi^-(u))$, $u,v \in [0,1]$, with generator $\psi$, the diagonal
  section is $\delta(t) = \psi(2\psi^-(t))$. 
  

  There are many parametric families of Archimedean generators. For
  example, the Gumbel family of copulas is generated by
  $\psi_\theta(t) = \exp(-t^{1/\theta})$, $\theta \in
  [1,\infty)$. Then, the cdf constructed in the proof of Proposition~\ref{prop:two} is $F_2(t) = 2t^a - t$, $t \in [0,1]$, with $a =  2^{-1/\theta}$.
\end{example}

\begin{example}[Identical distributions]
  \label{ex:equal}
  If $X_1$ and $X_2$ are identically distributed, then
  $F_1=F_2=G$. In this case, the diagonal section of $\tilde{C}$ in Theorem~\ref{thm:one} is
  given by $D(t)=t$ for all $t$ from the range of $G$. For $x_1>x_2$, by \eqref{eq:8},
  \begin{displaymath}
    F_1(x_2)=\tilde{C}(G(x_2),G(x_2))\leq 
    \tilde{C}(G(x_1),G(x_2))\leq \tilde{C}(1,G(x_2))
    =G(x_2),
  \end{displaymath}
  hence, $\tilde{C}(G(x_1),G(x_2)) = F_1(x_2)$. Therefore, by \eqref{eq:1}, the joint law of $(X_1,X_2)$ satisfies $F(x_1,x_2)=F_1(\min\{x_1,x_2\})$, meaning that
  $X_1=X_2$ almost surely.
\end{example}

\begin{example}[Discrete distributions]
  \label{ex:discrete}
  Assume that $X_1$ and $X_2$ have discrete distributions, say
  supported on $\{0,1\}$ with masses $p,1-p$ and $q,1-q$,
  respectively, and such that $p\leq q$. The range of $G$ is
  $\{0,(p+q)/2,1\}$. The condition \eqref{eq:8} on $\tilde{C}$ in
  Theorem~ \ref{thm:one} is \[\tilde{C}((p+q)/2,(p+q)/2)= p.\]
  While there are clearly many copulas that satisfy this constraint,
  the condition is sufficient to uniquely determine the joint law of
  $(X_1,X_2)$. By \eqref{eq:1},
  $\prob(X_1=1,X_2=0)=2\tilde{C}(1,(p+q)/2)-2p=q-p$.
\end{example}

\begin{example}[Disjoint supports]
  \label{ex:disjoint}
  Assume that $X_1$ is uniformly distributed on $[1,2]$ and $X_2$ on
  $[0,1]$. In this case, all kinds of dependency structures between
  $X_1$ and $X_2$ are allowed. For $x_1\in[1,2]$ and $x_2\in[0,1]$,
  \eqref{eq:1} yields that 
  \begin{displaymath}
    F(x_1,x_2)=2\tilde{C}(x_1/2,x_2/2). 
  \end{displaymath}
  As prescribed by \eqref{eq:8}, $\tilde{C}(t,t)=0$ for $t\in[0,1/2]$
  and $\tilde{C}(t,t)=2t-1$ for $t\in(1/2,1]$. This is the diagonal section of the Fr\'echet-Hoeffding lower bound.
  
  It is not a contradiction that any copula $C$ yields a possible bivariate law $F(x_1,x_2) = C(F_1(x_1),F_2(x_2))$ of $(X_1,X_2)$ such that $X_1 \ge X_2$ almost surely, but in the representation of Theorem~\ref{thm:one}, there are restrictions on the diagonal of the symmetric copula $\tilde{C}$. The copula $\tilde{C}$ is the copula of the random permutation $(V_1,V_2)$ of $(X_1,X_2)$, and as such it cannot put any mass on the squares $[0,1/2]^2$ or $[1/2,1]^2$.
\end{example}


\section{Pointwise bounds on the joint cdf}
\label{sec:maxim-indep-pairs}


By Theorem~\ref{thm:one}, the range of all possible bivariate cdfs of
random vectors $(X_1,X_2)$ with given marginals $F_1$ and $F_2$ and
such that $X_1\geq X_2$ a.s.\ depends on the choice of a symmetric
copula $\tilde{C}$ satisfying \eqref{eq:8}, equivalently, having the
diagonal section \eqref{eq:3} on the range of $G$. For a general
diagonal section $\delta$, the following result holds.

\begin{theorem}[\cite{nel:ques:rod:04,KlementKolesarova2005}]
  \label{thm:KK05}
  Each copula $\tilde{C}$ with diagonal section $\delta$
  satisfies
  \begin{displaymath}
    B_\delta(u,v)\leq \tilde{C}(u,v),\quad u,v\in[0,1],
  \end{displaymath}
  where 
  \begin{equation}
    \label{eq:lowerB}
    B_\delta(u,v) := \min\{u,v\}
    - \inf\{t - \delta(t) : t \in [\min\{u,v\},\max\{u,v\}]\},
    \quad u,v\in[0,1],
  \end{equation} 
  is the \emph{Bertino copula}.
  The copula $B_\delta$  has diagonal section $\delta$.
\end{theorem}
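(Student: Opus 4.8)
The plan is to handle the three assertions in turn, beginning with the pointwise lower bound. Let $(U_1,U_2)$ have copula $\tilde{C}$, so $\tilde{C}(t,t)=\delta(t)$, and fix $u\le v$ (the roles of $u$ and $v$ in the claimed bound are symmetric). For every $t\in[u,v]$ I would split
\[
\delta(t)=\prob(U_1\le t,U_2\le t)=\prob(U_1\le u,U_2\le t)+\prob(u<U_1\le t,U_2\le t)\le\tilde{C}(u,v)+(t-u),
\]
bounding the first summand using $t\le v$ and the second using the uniformity of $U_1$. Rearranging gives $\tilde{C}(u,v)\ge u-(t-\delta(t))$ for every such $t$, and taking the supremum over $t\in[u,v]$ yields $\tilde{C}(u,v)\ge B_\delta(u,v)$.

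Next I would check that $B_\delta$ is itself a copula. Write $g(t)=t-\delta(t)$; then $g\ge0$ by (D4) and $g(0)=g(1)=0$ by (D1), so the infimum defining $B_\delta$ equals $0$ whenever the interval $[\min\{u,v\},\max\{u,v\}]$ contains $0$ or $1$. This gives $B_\delta(u,0)=B_\delta(0,v)=0$ and $B_\delta(u,1)=u$, $B_\delta(1,v)=v$, so $B_\delta$ is grounded with uniform margins. The substance is the $2$-increasing inequality $V:=B_\delta(u_2,v_2)-B_\delta(u_1,v_2)-B_\delta(u_2,v_1)+B_\delta(u_1,v_1)\ge0$ for $u_1\le u_2$, $v_1\le v_2$, which I would establish by a case analysis on the position of the rectangle $[u_1,u_2]\times[v_1,v_2]$ relative to the diagonal. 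If it lies on one side, say $u_2\le v_1$, then $B_\delta$ reduces to $u-\inf_{[u,v]}g$ at all four corners, and with $p=\inf_{[u_1,u_2]}g$, $q=\inf_{[u_2,v_1]}g$, $r=\inf_{[v_1,v_2]}g$ one computes $V=q+\min\{p,q,r\}-\min\{p,q\}-\min\{q,r\}=q-\max\{\min\{p,q\},\min\{q,r\}\}\ge0$. The case $v_2\le u_1$ is symmetric.

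The remaining case, where the diagonal crosses the rectangle, is the main obstacle. Put $c=\max\{u_1,v_1\}$ and $d=\min\{u_2,v_2\}$; then $c\le d$ and both of $c,d$ lie in $[u_1,u_2]$ and in $[v_1,v_2]$. Cutting the rectangle by the lines $\{u=c\},\{u=d\},\{v=c\},\{v=d\}$ and using additivity of $V$ over this grid, one sees that of the nine resulting pieces two are always degenerate (since $c\in\{u_1,v_1\}$ and $d\in\{u_2,v_2\}$), six lie in $\{u\le v\}$ or in $\{u\ge v\}$ and are covered above, and the last is the square $[c,d]^2$. For that square, symmetry of $B_\delta$ gives $V=\delta(c)+\delta(d)-2c+2\inf_{[c,d]}g$, so it suffices to prove $\inf_{[c,d]}g\ge c-\tfrac12(\delta(c)+\delta(d))$; this follows from the estimate $\delta(t)\le t-c+\tfrac12(\delta(c)+\delta(d))$ for every $t\in[c,d]$, obtained by bounding $\delta(t)\le\delta(d)$ when $2(t-c)\ge\delta(d)-\delta(c)$ and $\delta(t)\le\delta(c)+2(t-c)$ (by (D3)) otherwise. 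Finally, the diagonal section of $B_\delta$ is $\delta$ by inspection, since $B_\delta(t,t)=t-\inf\{s-\delta(s):s\in[t,t]\}=t-(t-\delta(t))=\delta(t)$. The only genuinely delicate step is thus the crossing case: recognizing that the nine-piece decomposition isolates a single diagonally symmetric square, and that (D3) together with the monotonicity of $\delta$ is exactly what controls it.
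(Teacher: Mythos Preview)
The paper does not give a proof of this theorem; it is quoted as a known result from \cite{nel:ques:rod:04,KlementKolesarova2005}. Your argument is a correct self-contained proof, so there is nothing to compare against and your submission adds value.

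One bookkeeping slip does not affect validity but is worth flagging. In the crossing case you cut the rectangle by the four lines $u=c$, $u=d$, $v=c$, $v=d$ and assert that ``two'' of the nine pieces are degenerate. In fact, since $c=\max\{u_1,v_1\}$ equals one of $u_1,v_1$, an entire column or row collapses (three pieces), and likewise for $d$; at least five pieces are degenerate. What actually matters, and what you use, is the correct observation that every non-degenerate sub-rectangle other than the central square $[c,d]^2$ lies entirely in $\{u\le v\}$ or entirely in $\{u\ge v\}$, so it is covered by the first case. For instance the corner piece $[d,u_2]\times[d,v_2]$ is always degenerate because $d=\min\{u_2,v_2\}$, which is why it never needs to be placed on either side of the diagonal.

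The remaining steps are clean: the probabilistic derivation of the bound $\tilde{C}(u,v)\ge u-(t-\delta(t))$ is the standard one, the one-sided computation $V=q+\min\{p,q,r\}-\min\{p,q\}-\min\{q,r\}=q-\max\{\min\{p,q\},\min\{q,r\}\}\ge0$ is correct, and your two-case estimate $\delta(t)\le t-c+\tfrac12(\delta(c)+\delta(d))$ using monotonicity when $2(t-c)\ge\delta(d)-\delta(c)$ and the Lipschitz bound (D3) otherwise is exactly what is needed for the central square.
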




Denote
\begin{equation}
  \label{eq:11}
  H(x):=F_2(x)-F_1(x), \quad x\in\R.
\end{equation}

\begin{theorem} 
  \label{cor:KK05}
  Each random vector $(X_1,X_2)$ with marginal cdfs $F_1$ and $F_2$,
  and such that $X_1\geq X_2$ a.s., has a joint cdf $F$ satisfying
  \begin{equation}
    \label{eq:4}
    L(x_1,x_2)\leq F(x_1,x_2)\leq \min\{F_1(x_1),F_2(x_2)\},\quad
    x_1,x_2\in\R,
  \end{equation}
  where both bounds are attained, and 
  \begin{equation}
    \label{eq:5}
    L(x_1,x_2) :=
    \begin{cases}
      F_1(x_1), & x_1\leq x_2,\\
      F_2(x_2)-\inf_{x_2\leq s\leq x_1} H(s), & x_1 > x_2,
    \end{cases}
  \end{equation}
\end{theorem}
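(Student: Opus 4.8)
The plan is to treat the two inequalities separately. The upper bound $F(x_1,x_2)\le\min\{F_1(x_1),F_2(x_2)\}$ is the Fr\'echet--Hoeffding inequality, valid for every bivariate cdf with marginals $F_1,F_2$; for $x_1\le x_2$ it reads $F(x_1,x_2)\le F_1(x_1)$ since $F_1(x_1)\le F_2(x_1)\le F_2(x_2)$, consistent with Theorem~\ref{thm:one}, by which every admissible $F$ equals $F_1(x_1)$ there. It is attained by the comonotone coupling $X_i=F_i^-(U)$ with a single uniform $U$: as $F_1\le F_2$ implies $F_1^-\ge F_2^-$, we get $X_1\ge X_2$ almost surely, and the joint cdf is $\min\{F_1(x_1),F_2(x_2)\}$.

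For the lower bound I would combine Theorem~\ref{thm:one} with the Bertino bound of Theorem~\ref{thm:KK05}. For $x_1\le x_2$ there is nothing to show, as $F(x_1,x_2)=F_1(x_1)=L(x_1,x_2)$. For $x_1>x_2$, write $F(x_1,x_2)=2\tilde C(G(x_1),G(x_2))-F_1(x_2)$ for a symmetric copula $\tilde C$ satisfying \eqref{eq:8}, and let $\delta(t)=\tilde C(t,t)$ be its diagonal section. Because $G$ is increasing, Theorem~\ref{thm:KK05} gives
\begin{displaymath}
  \tilde C(G(x_1),G(x_2))\ge B_\delta(G(x_1),G(x_2))=G(x_2)-\inf\{t-\delta(t):\ t\in[G(x_2),G(x_1)]\}.
\end{displaymath}
Restricting the infimum to the arguments $t=G(s)$ with $s\in[x_2,x_1]$ and using $\delta(G(s))=\tilde C(G(s),G(s))=F_1(s)$ from \eqref{eq:8}, it is at most $\inf_{x_2\le s\le x_1}(G(s)-F_1(s))=\tfrac12\inf_{x_2\le s\le x_1}H(s)$. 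Substituting this, together with $2G(x_2)-F_1(x_2)=F_2(x_2)$, gives $F(x_1,x_2)\ge F_2(x_2)-\inf_{x_2\le s\le x_1}H(s)=L(x_1,x_2)$.

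To show that $L$ is attained, I would take $\tilde C=B_\delta$ with $\delta$ the extension to $[0,1]$ of the function $D=F_1\circ G^-$ (a diagonal section on $\mathrm{range}(G)$ by Corollary~\ref{cor:one}) obtained by interpolating affinely across each connected component of $[0,1]\setminus\overline{\mathrm{range}(G)}$. One checks that $\delta$ is still a diagonal section, so $B_\delta$ is a symmetric copula with diagonal section $\delta$; since $\delta$ coincides with $D=F_1\circ G^-$ on $\mathrm{range}(G)$ and $F_1\circ G^-\circ G=F_1$, it satisfies \eqref{eq:8}, so by Theorem~\ref{thm:one} the vector with joint cdf \eqref{eq:1} for $\tilde C=B_\delta$ and the given marginals is admissible. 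For it, the inequality of the previous paragraph becomes an equality: $t\mapsto t-\delta(t)$ is continuous and affine on each interpolated piece, so it attains its infimum over $[G(x_2),G(x_1)]$ at a point of $\overline{\mathrm{range}(G)}$, where the value equals $\tfrac12 H(s)$ for some $s\in[x_2,x_1]$ or a limit of such values; hence $\inf\{t-\delta(t):\ t\in[G(x_2),G(x_1)]\}=\tfrac12\inf_{x_2\le s\le x_1}H(s)$ and the joint cdf equals $L$ everywhere.

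The inequality $F\ge L$ is thus a short consequence of Theorems~\ref{thm:one} and~\ref{thm:KK05}; I expect the main obstacle to be the attainment of $L$ in the presence of atoms of $X_1$ or $X_2$, which produce gaps in $\mathrm{range}(G)$. There one must check both that the affine interpolation keeps $\delta$ in the class of diagonal sections (the increments across a gap have ratio in $[0,2]$ and its endpoints lie on or below the diagonal, so monotonicity, the $2$-Lipschitz property, and $\delta(t)\le t$ all persist) and that no interpolated piece drags the Bertino infimum below the values dictated by $H$ on $[x_2,x_1]$, so that the joint cdf of this Bertino construction is exactly $L$ for every pair $(x_1,x_2)$.
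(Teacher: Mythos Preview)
Your proposal is correct and follows essentially the same route as the paper: Fr\'echet--Hoeffding for the upper bound with the comonotone coupling for attainment, and Theorem~\ref{thm:one} combined with the Bertino lower bound of Theorem~\ref{thm:KK05} for the lower bound.

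There are two minor differences worth noting. For the inequality $F\ge L$, your argument is actually slightly more direct than the paper's: the paper first introduces the specific minimal extension $\delta_G(t)=\max\{D(t^-),D(t^+)-2(t^+-t)\}$ of $D$ off $\mathrm{range}(G)$, observes that every admissible diagonal section dominates $\delta_G$, and then shows the Bertino infimum for $\delta_G$ can be restricted to $\mathrm{range}(G)$. You bypass this by applying $B_\delta$ directly with the diagonal of the given $\tilde C$ and bounding the infimum over $[G(x_2),G(x_1)]$ by the infimum over the subset $\{G(s):s\in[x_2,x_1]\}$. For attainment, the paper uses the same $\delta_G$, whereas you extend $D$ by affine interpolation across the gaps of $\mathrm{range}(G)$; both yield valid diagonal sections (your check that the slope across a gap lies in $[0,2]$ and that convex combinations stay below the diagonal is exactly what is needed), and for either extension $t\mapsto t-\delta(t)$ is piecewise affine on the gaps, so the Bertino infimum is realized on $\overline{\mathrm{range}(G)}$, which coincides with $\overline{\{G(s):s\in[x_2,x_1]\}}$ inside $[G(x_2),G(x_1)]$ and hence yields $\tfrac12\inf_{x_2\le s\le x_1}H(s)$. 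The paper also notes the small point $F_1\circ G^-\circ G=F_1$ (since $G$ constant on an interval forces $F_1$ constant there), which underlies your identification $\delta(G(s))=F_1(s)$.
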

\begin{proof}[\textnormal{\textbf{Proof}}]
  The upper bound in \eqref{eq:4} is the Fr\'echet--Hoeffding
  one; it corresponds to complete dependence between $X_1$ and $X_2$,
  so that $X_1=F_1^-(U)$ and $X_2=F_2^-(U)$ for a standard
  uniformly distributed random variable $U$.

  For the lower bound, let $\delta$ be a diagonal section which is
  equal to $D$ at \eqref{eq:3} on the range $R_G$ of $G$. We
  continuously extend $D$ to the closure $\operatorname{cl}(R_G)$ of
  $R_G$. Continuity of $\delta$ implies that $\delta$ is equal
  $D$ on $\operatorname{cl}(R_G)$. The function $\delta$ is bounded below by
  $\delta_G$ defined as 
  \begin{displaymath}
    \delta_G(x)=\max\{D(x^-),D(x^+)-2(x^+-x)\},
  \end{displaymath}
  where $x^-=\sup R_G\cap[0,x]$ and $x^+=\inf R_G\cap[x,1]$. This
  function is itself a diagonal section which is equal to $D$ on
  $\operatorname{cl}(R_G)$. For all $u,v \in [0,1]$ it holds that
  $B_{\delta}(u,v) \ge B_{\delta_G}(u,v)$. Therefore,
  Theorem~\ref{thm:KK05} and \eqref{eq:1} imply for $x_1>x_2$,
  \begin{equation}
    \label{eq:43}
    F(x_1,x_2)\geq 2B_{\delta_G}(G(x_1),G(x_2))-F_1(x_2)
    =2(G(x_2)-\inf\{t-\delta_G(t): t\in[G(x_2),G(x_1)]\})-F_1(x_2). 
  \end{equation}
  Since 
  \begin{displaymath}
    t - \delta_G(t) = \min\{t^- - D(t^-) + t-t^-,t^+ - D(t^+)+t^+ - t\},
  \end{displaymath}
  we can restrict the infimum in \eqref{eq:43} to $R_G$. Hence, 
  \begin{align*}
    F(x_1,x_2)&\geq
    F_2(x_2)-2\inf\{G(x)-F_1\circ G^-\circ G(x):
    x\in[x_2,x_1]\} =L(x_1,x_2). 
  \end{align*}
  The last equality holds because $G^-\circ G(x) \le x$ always holds
  (see \cite{EmbrechtsHofert2013}) and $G^-\circ G(x) < x$ only
  happens if there is an $\varepsilon > 0$ such that $G$ is constant
  on $(x-\varepsilon,x]$. But if $G$ is constant on some interval,
  then $F_1$ is necessarily also constant on this interval.
\end{proof}

The lower bound in \eqref{eq:4} corresponds to the Bertino copula and
so yields the least possible dependence between $X_1$ and $X_2$. If
the function $H$ at \eqref{eq:11} is unimodal, this corresponds to the
assumption that the Bertino copula \eqref{eq:lowerB} is simple,
compare \cite{FredricksNelsen2002}. The unimodality condition (which
also appears in Theorem~\ref{thm:SpearmanW}) applies in many examples,
and simplifies the structure of the distribution $L$ considerably.

\begin{corollary}
  \label{cor:unimodal}
  Assume that the function $H$ is unimodal, that is, $H$ increases on
  $(-\infty,r]$ and decreases on $[r,\infty)$ for some $r$.  Then 
  \begin{equation}
    \label{eq:10}
    L(x_1,x_2) =
    \begin{cases}
      F_1(x_1), & x_1\leq x_2,\\
      F_1(x_1)-\min\{F_1(x_1)-F_1(x_2),F_2(x_1)-F_2(x_2)\}, & x_1 > x_2.
    \end{cases}
  \end{equation}
 \end{corollary}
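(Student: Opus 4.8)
The plan is to start from the general formula \eqref{eq:5} for $L$ and simplify the infimum $\inf_{x_2 \le s \le x_1} H(s)$ under the unimodality hypothesis. First I would observe that on the interval $[x_2, x_1]$ with $x_1 > x_2$, the function $H$ is either monotone (if $[x_2,x_1]$ lies entirely in $(-\infty, r]$ or entirely in $[r,\infty)$) or it increases up to $r$ and then decreases; in the latter case its minimum over the closed interval is attained at one of the endpoints. In all cases, therefore, $\inf_{x_2 \le s \le x_1} H(s) = \min\{H(x_2), H(x_1)\}$. This is the one genuine observation in the argument.

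Next I would substitute this identity into \eqref{eq:5}: for $x_1 > x_2$,
\[
L(x_1,x_2) = F_2(x_2) - \min\{H(x_2), H(x_1)\}.
\]
Then I expand $H = F_2 - F_1$ and push the minimum through. Writing $F_2(x_2) - \min\{H(x_2),H(x_1)\} = F_2(x_2) + \max\{-H(x_2), -H(x_1)\} = \max\{F_1(x_2), F_2(x_2) - H(x_1)\} = \max\{F_1(x_2), F_2(x_2) - F_2(x_1) + F_1(x_1)\}$. Subtracting and adding $F_1(x_1)$ gives $F_1(x_1) - \min\{F_1(x_1) - F_1(x_2),\, F_1(x_1) - F_2(x_2) + F_2(x_1) - F_1(x_1)\}$; the second term in the $\min$ simplifies to $F_2(x_1) - F_2(x_2)$, yielding exactly the right-hand side of \eqref{eq:10}. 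This is routine algebra once the infimum has been evaluated.

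The only step requiring any care is the reduction of the infimum to the endpoint minimum, and even that is essentially immediate from the definition of unimodality: a function that is nondecreasing then nonincreasing on an interval attains its minimum over that interval at an endpoint. One might add a sentence noting that $H$ need not be continuous, but since $H = F_2 - F_1$ is a difference of cdfs the infimum over the closed interval is still realized by approaching an endpoint, and left/right limits do not affect the value $\min\{H(x_2),H(x_1)\}$ because the monotonicity on each side controls the behavior — so no additional hypothesis beyond unimodality is needed. I do not expect any real obstacle here; the corollary is a direct specialization of Theorem~\ref{cor:KK05}.
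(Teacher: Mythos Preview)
Your proposal is correct and follows essentially the same approach as the paper: both arguments reduce the infimum in \eqref{eq:5} to $\min\{H(x_2),H(x_1)\}$ via unimodality and then perform an equivalent algebraic rearrangement to reach \eqref{eq:10}. Your remark about continuity is superfluous, since the monotonicity pieces of the unimodality assumption already force $H(s)\ge\min\{H(x_2),H(x_1)\}$ for every $s\in[x_2,x_1]$ without any appeal to limits.
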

\begin{proof}[\textnormal{\textbf{Proof}}] Let $x_1 > x_2$.
  By the unimodality, the infimum of $H$ over $[x_2,x_1]$ is attained
  at one of the end-points $x_2$ or $x_1$. Therefore, 
  \begin{align*}
F_2(x_2)-\inf_{x_2\leq s\leq x_1} H(s) &= F_2(x_2) - \min\{F_2(x_1)-F_1(x_1),F_2(x_2) - F_1(x_2)\} \\&=  F_1(x_1) - \min\{F_2(x_1)-F_1(x_1)-F_2(x_2) + F_1(x_1),F_2(x_2) - F_1(x_2)-F_2(x_2) + F_1(x_1)\},
  \end{align*}
  which yields \eqref{eq:10}.
  \end{proof}

\begin{example}[Disjoint supports -- Example \ref{ex:disjoint} continued]\label{ex:disjoint2}
  We assume that $X_1$ is uniformly distributed on $[1,2]$ and $X_2$ on
  $[0,1]$. Then, 
  \begin{equation}\label{eq:disjoint21}
  H(x) = F_2(x) - F_1(x) = \begin{cases} x, & x \in [0,1],\\
  2 - x, &x \in [1,2], \\
  0, & \text{otherwise},\end{cases}
  \end{equation}
  which is clearly unimodal. Therefore, by Corollary \ref{cor:unimodal},
  and for $x_1, x_2 \in [0,2]$
  \begin{equation}\label{eq:disjoint22}
    L(x_1,x_2) =
    \begin{cases}
    0, & x_1 \in [0,1],\\
     x_1 - 1, & x_1,x_2 \in [1,2],\\
     \max\{x_1+x_2-2,0\}, & x_1 \in [1,2], x_2 \in [0,1].\\
    \end{cases}
    = \max\{F_1(x_1) + F_2(x_2) - 1,0\},
  \end{equation}
  which corresponds to choosing the Fr\'echet-Hoeffding lower bound as
  the dependence structure for $(X_1,X_2)$.
\end{example}

As shown by Rogers in \cite{rog99}, if $H$ is unimodal, the distribution given by
\eqref{eq:10} maximizes the payoff (or transportation cost)
$\E\phi(|X_1-X_2|)$ over all strictly convex decreasing functions
$\phi:\R_+\mapsto\R_+$. Without unimodality assumption, the joint
distribution maximizing the payoff is given by
\begin{displaymath}
  P(x_1,x_2):=
  \begin{cases}
    F_1(x_1), & x_1\leq x_2,\\
    \sup_{v\leq x_2}\Big[F_2(v)-\inf_{v\leq s\leq x_1} H(s)\Big], & x_1 > x_2.
  \end{cases}
\end{displaymath}
This joint distribution satisfies $L(x_1,x_2)\leq P(x_1,x_2)\leq
\min\{F_1(x_1),F_2(x_2)\}$; it provides the joint distribution with
the largest mass concentrated on the diagonal, see
\cite[Th.~7.2.6]{rac:rus98}.

The following result concerns the support of the random vector with
distribution $L$ in the case when the cdfs $F_1$ and $F_2$ are
continuous. In the general case, the support of $L$ is more intricate
to describe.

\begin{lemma}
  \label{lemma:support} 
  Suppose that $F_1$ and $F_2$ are continuous.
  The support of the distribution $L$ given at
  \eqref{eq:5} is the set
  \begin{align}
    \label{eq:9}
    A&=\Big\{(x_1,x_2)\in\R^2:\; 
    x_2 < x_1,\; H(x_2)=H(x_1)< H(s) \;\text{for all $s \in (x_2,x_1)$} \Big\}\notag \\
    & \quad \cup \Big\{(x_1,x_2)\in\R^2:\; x_2 < x_1,\; H(x_2)=H(x_1)\le
    H(s) \;\text{for all $s \in (x_2,x_1)$}\notag \\& \qquad \quad\text{and 
      $H(s) < H(x_1)=H(x_2)$ for $s \in
      (x_2-\eps,x_2) \cup (x_1,x_1+\eps)$ for some $\eps>0$}
    \Big\}\notag 
    \\& \quad \cup \Big\{(x,x) \in \R^2:\; x \in (S_1 \cap S_2)\backslash T\Big\},
  \end{align}
  where $S_1$ and $S_2$ are the supports of the distributions $F_1$ and
  $F_2$, respectively, and
  \begin{displaymath}
    T= \Big\{ x \in \partial S_1 \cap \partial S_2:\; 
    \text{$(x,x+\eps)\cap S_1 = \emptyset$ and $(x-\eps,x) \cap S_2 =
      \emptyset$ for some $\eps>0$}\Big\}.
  \end{displaymath}
  Here, $\partial S_i$, denotes the topological boundary of $S_i$,
  $i=1,2$.
\end{lemma}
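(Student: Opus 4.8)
The plan is to decide membership in the support of $L$ point by point, using the explicit cdf \eqref{eq:5} to compute the $L$-mass of small axis-parallel squares. A point $p=(x_1,x_2)$ lies in the support precisely when every open neighbourhood of $p$ carries positive $L$-mass; since $F_1$ and $F_2$ are continuous, $L$ assigns zero mass to every horizontal or vertical line, so the mass of a closed square equals that of its interior, and, because this mass is monotone in the radius, it suffices to require $L([x_1-\eps,x_1+\eps]\times[x_2-\eps,x_2+\eps])>0$ for all small $\eps>0$. As $L(x_1,x_2)=F_1(x_1)$ on $\{x_1\le x_2\}$, any such square lying in $\{x_1<x_2\}$ has zero mass, so the support is contained in $\{x_1\ge x_2\}$, and I would treat the strictly off-diagonal part and the diagonal separately.

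For $x_1>x_2$ and $2\eps<x_1-x_2$ the square lies entirely in $\{x_1>x_2\}$; expanding the rectangle increment of $L$ via \eqref{eq:5}, writing $I[a,b]:=\inf_{[a,b]}H$, the four corner terms $F_2(\cdot)$ cancel and the remaining infima combine, after a short case check, to
\begin{displaymath}
  L\bigl([x_1-\eps,x_1+\eps]\times[x_2-\eps,x_2+\eps]\bigr)
  =\max\bigl\{0,\;Q(\eps)-\max\{P(\eps),R(\eps)\}\bigr\},
\end{displaymath}
with $P(\eps)=I[x_2-\eps,x_2+\eps]$, $Q(\eps)=I[x_2+\eps,x_1-\eps]$, $R(\eps)=I[x_1-\eps,x_1+\eps]$. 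Hence $p$ is in the support iff $Q(\eps)>P(\eps)$ and $Q(\eps)>R(\eps)$ for all small $\eps$. As $\eps\downarrow0$, continuity of $H$ gives $P(\eps)\uparrow H(x_2)$, $R(\eps)\uparrow H(x_1)$, $Q(\eps)\downarrow\inf_{[x_2,x_1]}H$; since $\inf_{[x_2,x_1]}H\le\min\{H(x_1),H(x_2)\}$, the two inequalities force $H(x_1)=H(x_2)=\inf_{[x_2,x_1]}H$, that is, $H(x_1)=H(x_2)\le H(s)$ on $(x_2,x_1)$. Granting this, I would split on whether the infimum is attained in the open interval: if $H>H(x_1)$ strictly on $(x_2,x_1)$, then $Q(\eps)>H(x_1)\ge\max\{P(\eps),R(\eps)\}$ for small $\eps$ and $p$ is in the support, which is the first family in $A$; otherwise $Q(\eps)=H(x_1)$ for small $\eps$, so positivity is equivalent to $P(\eps)<H(x_1)$ and $R(\eps)<H(x_1)$ for all small $\eps$, and a further continuity argument (again using $H\ge H(x_1)$ on $(x_2,x_1)$) reduces this to $H$ dropping strictly below $H(x_1)$ just to the left of $x_2$ and just to the right of $x_1$, which is the second family in $A$.

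For the diagonal, fix $x$; using both branches of \eqref{eq:5}, the rectangle increment over $[x-\eps,x+\eps]^2$ simplifies to
\begin{displaymath}
  L\bigl([x-\eps,x+\eps]^2\bigr)
  =\inf_{s\in[x-\eps,x+\eps]}\bigl(F_2(s)-F_1(s)\bigr)-\bigl(F_2(x-\eps)-F_1(x+\eps)\bigr).
\end{displaymath}
Monotonicity of $F_1,F_2$ gives $F_2(s)-F_1(s)\ge F_2(x-\eps)-F_1(x+\eps)$ for every $s$, so the right-hand side is nonnegative and equals $0$ exactly when a minimiser $s^\ast$ has $F_2(s^\ast)=F_2(x-\eps)$ and $F_1(s^\ast)=F_1(x+\eps)$, i.e.\ $F_2$ constant on $[x-\eps,s^\ast]$ and $F_1$ constant on $[s^\ast,x+\eps]$. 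Choosing $s^\ast=x-\eps$ when $x\notin S_1$ and $s^\ast=x+\eps$ when $x\notin S_2$ shows the mass vanishes for small $\eps$, so $(x,x)$ is then not in the support. If $x\in S_1\cap S_2$, any such $s^\ast$ must equal $x$ (otherwise an open interval around $x$ would be $F_1$- or $F_2$-null), so the mass vanishes for some $\eps$ iff $F_2$ is constant on a left-neighbourhood of $x$ and $F_1$ is constant on a right-neighbourhood of $x$, which is exactly the condition defining $T$. Hence $(x,x)$ lies in the support iff $x\in(S_1\cap S_2)\setminus T$, the third family in $A$.

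\textbf{Main obstacle.} The only genuinely delicate step is the off-diagonal translation: converting the scale-uniform positivity ``$Q(\eps)>\max\{P(\eps),R(\eps)\}$ for every small $\eps$'' into the pointwise geometry of $H$ near $x_1$ and $x_2$, keeping exact track of strict versus non-strict inequalities (which is what separates the first and second families in $A$) and invoking continuity of $F_1,F_2$ at each step. One also has to check that the set $A$ produced this way is topologically closed, as every support must be; via continuity of $H$ and a compactness argument on the relevant level sets of $H$, this reduces to verifying that a limit of configurations of the three listed types is again of one of those types.
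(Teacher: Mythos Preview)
Your approach is essentially the paper's own. Both proofs compute the $L$-mass of small axis-parallel squares directly from \eqref{eq:5} and analyse when it is positive; your $P(\eps),Q(\eps),R(\eps)$ are exactly the paper's $a,b,c$, and your closed form $\max\{0,Q-\max\{P,R\}\}$ is an equivalent rewriting of the paper's $\min\{a,b,c\}+b-\min\{a,b\}-\min\{b,c\}$ (a short case check confirms they coincide). The diagonal computation and the minimiser argument via $s^\ast$ also match the paper's $s_0(\eps)$ analysis. The paper, like you, leaves the final translation from ``$\max\{a,c\}<b$ for all small $\eps$'' to the explicit description of the two off-diagonal families at the level of ``it is not hard to check''.

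One remark: the closing paragraph about verifying that $A$ is topologically closed is superfluous. You have argued pointwise that a point lies in the support of $L$ if and only if it lies in $A$; since the support of any Borel measure is closed by definition, the identity ``support $=A$'' already forces $A$ to be closed. No separate compactness or limit argument is required.
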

\begin{proof}[\textnormal{\textbf{Proof}}] Let $(X_1,X_2)$ have distribution $L$. Let $x \in \R$ and $\eps > 0$. Then, 
  \begin{align}\label{eq:765}
    \prob((X_1,X_2)\in (x-\eps,x+\eps]^2)&=F_1(x+\eps)-F_2(x-\eps)+\inf_{x-\eps\leq s\leq x+\eps}
    H(s)\\
   & =\inf_{x-\eps\leq s\leq x+\eps} \big(F_1(x+\eps) - F_1(s) + F_2(s) - F_2(x-\eps)\big)\nonumber
  \end{align}
The right hand side can only be strictly positive if 
  \begin{displaymath}
    \min\{F_1(x+\eps)-F_1(x-\eps),F_2(x+\eps)-F_2(x-\eps)\}>0,
  \end{displaymath}
  which is the case whenever $\prob(X_1\in(x-\eps,x+\eps])>0$ and
  $\prob(X_2\in(x-\eps,x+\eps])>0$. Thus, only points $(x,x)$ with $x$ belonging
  to $S_1$ and $S_2$ may be in the diagonal parts of the support of $L$. If $x \in T$ and $\eps > 0$ is small enough, then
  \[
  \inf_{x-\eps\leq s\leq x+\eps} \big(F_1(x+\eps) - F_1(s) + F_2(s) - F_2(x-\eps)\big) = \big(F_1(x+\eps) - F_1(x) + F_2(x) - F_2(x-\eps) = 0,
  \]
  so $(x,x)$ cannot belong to the support of $L$.
  
  Conversely, assume that $x \in S_1 \cap S_2$. Since $H$ is
  continuous, the infimum in \eqref{eq:765} is attained at some
  $s_0(\eps) \in [x-\eps,x+\eps]$, if $s_0(\eps) < x$, then $x \in
  (s_0(\eps),x+\eps)$, hence $F_1(x+\eps) - F_1(s_0(\eps)) > 0$
  because $x \in S_1$. One can argue analogously if $s_0(\eps) >
  x$. If $s_0(\eps) = x$, then we distinguish two cases. If $x \in
  \operatorname{int} S_1$ or $x \in \operatorname{int} S_2$, then one
  can argue as previously. Here $\operatorname{int}(S_i)$ denotes the
  interior of $S_i$, $i\in\{1,2\}$. If $x \in \partial S_1 \cap \partial
  S_2$ and
  \[
  \inf_{x-\eps\leq s\leq x+\eps} \big(F_1(x+\eps) - F_1(s) + F_2(s) - F_2(x-\eps)\big) = \big(F_1(x+\eps) - F_1(x) + F_2(x) - F_2(x-\eps) = 0,
  \]
  then $x \in T$ which yields the claim concerning the diagonal part of the support of $L$.

  Now assume that $x_2<x_1$ and $0 < \eps < (x_1-x_2)/2$. Then 
  \begin{equation}
    \label{eq:6}
    \prob((X_1,X_2)\in (x_1-\eps,x_1+\eps]\times(x_2-\eps,x_2+\eps])
    =\min\{a,b,c\}+b-\min\{a,b\}-\min\{b,c\},
  \end{equation}
  where
  \[
    a:=\inf_{x_2-\eps\leq s\leq x_2+\eps} H(s),\quad b:=\inf_{x_2+\eps\leq s\leq x_1-\eps} H(s),\quad c:=\inf_{x_1-\eps\leq s\leq x_1+\eps} H(s).
  \]
  The probability in \eqref{eq:6} is strictly positive if and only if
  $a\leq c< b$ or $c\leq a< b$. The point $(x_1,x_2)$ belongs to the support of $L$ if and only if $\max\{a,c\} < b$ for all $\eps>0$ small enough. Letting $\eps$ converge to zero, we find that a necessary condition is that 
  \[
  \max\{H(x_1),H(x_2)\} \le \min\{H(x_2),H(x_1)\},
  \]
  hence $H(x_1) = H(x_2)$. It is not hard to check that the conditions on $x_2$ and $x_1$ in $A$ are necessary and sufficient to ensure that $\max\{a,c\} < b$ is fulfilled for all $\eps > 0$ small enough. 
\end{proof}

The set $A$ from \eqref{eq:9} is illustrated in the top-left panel of Fig.~\ref{fig:ex}
using points sampled from $L$. 

\begin{example}[Disjoint supports -- Example \ref{ex:disjoint} continued]
  We assume that $X_1$ is uniformly distributed on $[1,2]$ and $X_2$
  on $[0,1]$. The function $H$ and the distribution $L$ are given at
  \eqref{eq:disjoint21} and \eqref{eq:disjoint22}, respectively. The
  support of $L$ is given by
  \[
  \Big\{(2-x,x) \in \R^2:\; x \in [0,1]\Big\}.
  \]
  This follows from Example~\ref{ex:disjoint2} or Lemma~\ref{lemma:support}. The set $A$ at
  \eqref{eq:9} consists of three parts. The first set is
  $\{(2-x_2,x_2)\in \R^2:\; x_2 \in [0,1)\}$, the second set is empty,
  and the third set is $\{(x,x) \in \R^2:\; x = 1\}$ because $T$ is the empty set.
\end{example}

\section{Nonparametric correlation coefficients}
\label{sec:nonp-corr-coeff}

Dependence measures quantitatively summarize the degree of dependence
between two random variables $X_1$ and $X_2$.  Kendall's tau and
Spearman's rho are arguably the two most well-known measures of
association whose sample versions are purely based on ranks. If the
marginal distributions of $X_1$ and $X_2$ are continuous then the
population versions of Kendall's tau and Spearman's rho only depend on
the copula of $(X_1,X_2)$. In this section, we assume that both, $F_1$
and $F_2$ are continuous, and hence the copula of $(X_1,X_2)$ is
uniquely defined. We refer the reader to \cite{Neslehova2007} for
details concerning problems that arise in the case of arbitrary
marginal distributions.

Kendall's tau of a copula $C$ is given by 
\begin{equation}
  \label{eq:Ktau}
  \tau := 4\int_{[0,1]^2} C(u,v) \diff C(u,v) - 1,
\end{equation}
and, Spearman's rho is given by
\begin{equation}
  \label{eq:Srho}
  \rho := 12 \int_{[0,1]^2} C(u,v)\diff u \diff v - 3
  = 12 \int_{[0,1]^2} uv \diff C(u,v) - 3.
\end{equation}
These two correlation coefficients are monotonic with respect to
pointwise, or, concordance ordering of copulas
\cite{Neslehova2007}. They take the value one for the joint cdf given
by the upper bound in \eqref{eq:4}. For the copula
$C(u,v):=L(F_1^-(u),F_2^-(v))$ with $L$ given by \eqref{eq:5}, these
dependence measures attain their lowest values calculated as follows.

\begin{theorem}
  \label{thm:KendallW}
  Suppose that $F_1$ and $F_2$ are continuous. The smallest possible
  Kendall's tau of $(X_1,X_2)$ that satisfies the conditions of
  Theorem~\ref{thm:one} is
  \begin{displaymath}
    \tau=4\;\E F_1(X_2)-1.
  \end{displaymath}
\end{theorem}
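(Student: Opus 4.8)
The plan is to reduce the minimization to the single joint law $L$ from \eqref{eq:5} (Theorem~\ref{cor:KK05}) and then to read off Kendall's $\tau$ there from the description of the support of $L$ in Lemma~\ref{lemma:support}. Since $F_1$ and $F_2$ are continuous, the copula $C$ of $(X_1,X_2)$ is unique, $F(x_1,x_2)=C(F_1(x_1),F_2(x_2))$ for all $x_1,x_2$, and $(F_1(X_1),F_2(X_2))\sim C$; substituting this into \eqref{eq:Ktau} yields $\tau=4\,\E[F(X_1,X_2)]-1$ with $(X_1,X_2)\sim F$. Kendall's $\tau$ is monotone with respect to the concordance (i.e.\ pointwise) order of cdfs with the prescribed marginals, and by Theorem~\ref{cor:KK05} every admissible joint cdf dominates $L$ pointwise, while $L$ itself is admissible (a genuine cdf with marginals $F_1$, $F_2$, arising from the Bertino copula in Theorem~\ref{thm:KK05}). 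Hence the smallest possible $\tau$ is attained by $L$, and it remains to compute $4\,\E_L[L(X_1,X_2)]-1$ with $(X_1,X_2)\sim L$.

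The key point is the identity $L(x_1,x_2)=F_1(x_2)$ for every $(x_1,x_2)$ in the support $A$ of $L$. For $x_1>x_2$ one has $L(x_1,x_2)=F_2(x_2)-\inf_{x_2\le s\le x_1}H(s)$ and $F_1(x_2)=F_2(x_2)-H(x_2)$, so the identity amounts to the infimum of $H$ over $[x_2,x_1]$ being attained at the left endpoint $x_2$. This is immediate on the diagonal part of $A$, and on the two off-diagonal pieces of \eqref{eq:9} it is precisely the requirement $H(x_2)=H(x_1)\le H(s)$ for $s\in(x_2,x_1)$. Since $(X_1,X_2)\in A$ almost surely under $L$, it follows that $\E_L[L(X_1,X_2)]=\E_L[F_1(X_2)]=\int_{\R}F_1\diff F_2=\E F_1(X_2)$, where the last quantity depends only on the fixed marginal law $F_2$ of $X_2$; hence the minimal Kendall's $\tau$ equals $4\,\E F_1(X_2)-1$.

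The step I expect to require the most care is the reduction in the first paragraph: confirming that $L$ is a bona fide admissible distribution (so that the infimum over admissible laws is actually attained) and that the pointwise inequality $F\ge L$ carries over to the corresponding copulas, which is where continuity of $F_1$ and $F_2$ is used through $C(u,v)=F(F_1^-(u),F_2^-(v))$. Once this is in place, the computation of $\tau$ at $L$ is an immediate consequence of Lemma~\ref{lemma:support}; alternatively, one could bypass that lemma by showing directly that $\inf_{s\in[X_2,X_1]}H(s)=H(X_2)$ holds $L$-almost surely, which is the same fact stated differently.
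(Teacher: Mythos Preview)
Your argument is correct and follows essentially the same route as the paper: reduce to the joint law $L$ via the concordance bound of Theorem~\ref{cor:KK05}, then use Lemma~\ref{lemma:support} to see that on the support $A$ of $L$ one has $\inf_{[X_2,X_1]}H=H(X_2)$, hence $L(X_1,X_2)=F_1(X_2)$ almost surely. The paper is simply terser, having already placed the concordance-monotonicity reduction in the text preceding the theorem rather than inside the proof.
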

\begin{proof}[\textnormal{\textbf{Proof}}]
  Writing $C(u,v)=L(F_1^-(u),F_2^-(v))$ yields that
  \begin{displaymath}
    \tau=4\;\E L(X_1,X_2)-1,
  \end{displaymath}
  where $(X_1,X_2)$ has cdf $L$ given by \eqref{eq:5}. By
  Lemma~\ref{lemma:support}, the support of $(X_1,X_2)$ is given by
  the set $A$ at \eqref{eq:9}.  On the set $A$,
  \begin{displaymath}
    L(X_1,X_2)=F_2(X_2)-(F_2(X_2)-F_1(X_2)),
  \end{displaymath}
  hence the result.
\end{proof}

\begin{theorem} 
  \label{thm:SpearmanW}
  Suppose that $X_1$ and $X_2$ have continuous cdfs $F_1$ and $F_2$
  with the same support $[x^L,x^U]$ and that the function $H(s)=F_2(s)
  - F_1(s)$ from \eqref{eq:11} is unimodal, strictly increases on
  $(x^L,r]$ and strictly decreases on $[r,x^U)$ for some $r$.
  Then the smallest possible Spearman's rho of $(X_1,X_2)$ satisfying
  the conditions of Theorem~\ref{thm:one} is
  \begin{align}
    \label{eq:sp-rho}
    \rho= 12\; \Bigg[\int_{x^L}^r F_1(s)F_2(s)\diff F_1(s)
    &+ \int_r^{x^U} F_1(s)F_2(t(s))\diff F_1(s) + \int_r^{x^U} F_1(s)(F_2(s)-F_2(t(s)))\diff F_2(s)\Bigg]
    -3,
  \end{align}
  where $F_2(t(s))-F_1(t(s))=F_2(s)-F_1(s)$ and $t(s)<s$ for $s\in(r,x^U]$ and
  $t(r)=r$. 
\end{theorem}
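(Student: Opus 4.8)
The plan is to identify the minimal Spearman's rho with an integral of $F_1\otimes F_2$ against the least‑dependent joint law $L$ from \eqref{eq:5}, and to evaluate that integral using the description of the support of $L$ in Lemma~\ref{lemma:support}. As noted in the discussion preceding Theorem~\ref{thm:KendallW}, the copula $C(u,v)=L(F_1^-(u),F_2^-(v))$ is concordance‑minimal among all copulas admissible in Theorem~\ref{thm:one}, and since Spearman's rho is monotone for the concordance order it yields the smallest value. Letting $(X_1,X_2)$ have joint cdf $L$, continuity of $F_1$ and $F_2$ makes $C$ the law of $(F_1(X_1),F_2(X_2))$, so by \eqref{eq:Srho}, $\rho=12\int_{[0,1]^2}uv\diff C(u,v)-3=12\,\E\bigl[F_1(X_1)F_2(X_2)\bigr]-3$; it remains to compute the expectation.

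Next I would pin down the structure of $L$. Under the present hypotheses the support $A$ from \eqref{eq:9} reduces to the curve $\{(s,t(s)):s\in(r,x^U)\}$ together with the full diagonal $\{(x,x):x\in[x^L,x^U]\}$: strict unimodality forces $x_2<r<x_1$ (hence $x_2=t(x_1)$) whenever $x_2<x_1$ and $H(x_1)=H(x_2)$, which makes the second set in \eqref{eq:9} redundant, while the common support and continuity give $S_1\cap S_2=[x^L,x^U]$ and $T=\emptyset$. One then reads off the two parts of $L$ directly from \eqref{eq:5}: for $a\le r$, $\prob(X_1=X_2\le a)=L(a,a)=F_1(a)$; for $a\in(r,x^U)$, combining $L(a,a)=F_1(a)$ with $\prob(X_1>a,\,X_1>X_2)=\prob(X_1>a,\,X_2<r)=F_2(r)-L(a,r)=H(a)$ gives $\prob(X_1=X_2\le a)=F_2(a)-H(r)$. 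Thus along the diagonal the common value carries the measure $\diff F_1$ on $(x^L,r)$ and $\diff F_2$ on $(r,x^U)$, while along the curve $X_2=t(X_1)$ with $X_1$ distributed over $(r,x^U)$ according to $-\diff H=\diff F_1-\diff F_2$, a nonnegative measure there since $H$ decreases on $[r,x^U)$. As a sanity check, the total mass is $(1-H(r))+H(r)=1$ and the two marginals are $F_1$ and $F_2$.

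Finally, splitting $\E[F_1(X_1)F_2(X_2)]$ over the diagonal and the curve gives
\[
\E\bigl[F_1(X_1)F_2(X_2)\bigr]=\int_{x^L}^{r}F_1(s)F_2(s)\diff F_1(s)+\int_{r}^{x^U}F_1(s)F_2(s)\diff F_2(s)+\int_{r}^{x^U}F_1(s)F_2(t(s))\,(-\diff H(s)),
\]
and replacing $-\diff H$ by $\diff F_1-\diff F_2$ in the last integral and recombining with the second reproduces exactly the bracket in \eqref{eq:sp-rho}; multiplying by $12$ and subtracting $3$ concludes.

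The main obstacle is the middle step: justifying the precise mass decomposition of $L$, i.e.\ keeping track of which portions of the support $A$ meet the relevant rectangles and extracting $\diff F_1$, $\diff F_2$ and $-\diff H$ as the one‑dimensional laws along the diagonal and along the curve. The reduction and the final integration are routine.
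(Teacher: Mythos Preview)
Your proposal is correct and follows essentially the same route as the paper: write $\rho=12\,\E[F_1(X_1)F_2(X_2)]-3$ for $(X_1,X_2)\sim L$, decompose the support $A$ into the diagonal piece on $(x^L,r]$ carrying $\diff F_1$, the diagonal piece on $(r,x^U)$ carrying $\diff F_2$, and the curve $\{(s,t(s))\}$ carrying $-\diff H$, then integrate. Your derivation of the one-dimensional laws via $L(a,a)$ and $L(a,r)$ is in fact more explicit than the paper's, which simply asserts the three push-forwards without computation.
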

\begin{proof}[\textnormal{\textbf{Proof}}]
  Spearman's rho is given by 
  \begin{displaymath}
    \rho=12\;\E \big(F_1(X_1)F_2(X_2)\big)-3.
  \end{displaymath}
  The set $A$ from \eqref{eq:9}, consists of 3 pieces: $\{(s,s):\;
  s\in(x^L,r]\}$ with the push-forward of $F_1(s)$,
  $s\in(x^L,r]$; $\{(s,s):\; s\in(r,x^U)\}$ with the
  distribution being the image of the measure on $(r,x^U)$ with
  push-forward of $F_2(s)-F_2(r)$; and $\{(s,t(s)):\;
  s\in(r,x^U)\}$ with the distribution being the push-forward of
  $H(r)-H(s)$. The push-forward is the image of the measure on the
  line by the specified map, e.g., the third part if the image of the
  measure $\mu$ on $(r,x^U)$ with $\mu((r,s])=H(r)-H(s)$ under
  the map $s\mapsto(s,t(s))$. 

  The result is obtained by splitting the above expectation into these
  3 parts.
\end{proof}  



\section{Maximum entropy distributions}
\label{sec:maxim-entr-solut}

We assume that both $X_1$ and $X_2$ have full supports on a (possibly
infinite) interval $[x^L,x^U]$ and that their cdfs $F_1$ and $F_2$ are
absolutely continuous with densities $f_1$ and $f_2$.  Amongst all
joint absolutely continuous laws $F$ of $(X_1,X_2)$ with given
marginals $F_1$ and $F_2$ and such that $X_1 \ge X_2$ a.s., we 
characterize those maximizing the differential entropy (see
\cite[Ch.~8]{cov:thom06}) given by
\begin{equation}
  \label{eq:Shannon}
  \ent(F) :=  -\int f(x_1,x_2)\log f(x_1,x_2)\diff x_1\diff x_2.
\end{equation}
These copulas correspond to the least informative (most random) joint
distributions, equivalently, to the distributions minimizing the
Kullback--Leibler divergence with respect to the uniform
distribution. We use the common convention $0\log 0 = 0$. Independently of our work, maximum entropy distributions of order statistics in the multivariate case have been studied in \cite{ButuceaDelmasETAL2018}.

Note that the function $G$ from \eqref{eq:2} is absolutely continuous
with density $g = (f_1 + f_2)/2$.  By Theorem~\ref{thm:one}, the joint
law $F$ of $(X_1,X_2)$ is absolutely continuous if and only if the
associated symmetric copula $\tilde{C}$ is absolutely continuous. We
denote its density by $\tilde{c}$.  By the symmetry of $\tilde{C}$,
\begin{equation}
  \label{eq:HFHC}
  \ent(F) = -\int_{[0,1]^2} \tilde{c}(z_1,z_2)
  \log \tilde{c}(z_1,z_2)\diff z_1\diff z_2
  - \log 2 - 2 \int_{x^L}^{x^U} g(z) \log g(z)\diff z. 
\end{equation}
Therefore, maximizing $\ent(F)$ over all $F$ is equivalent to
maximizing $\ent(\tilde{C})$ over all symmetric copulas $\tilde{C}$
with diagonal section $D=F_1 \circ G^-$. Note that the smallest
entropy $-\infty$ arises as the limit by considering absolutely
continuous distributions approximating the distribution of
$X_1=F_1^-(U)$ and $X_2=F_2^-(U)$ for a uniformly distributed $U$.

Butucea et al.~\cite{ButuceaDelmasETAL2015} characterize the maximum entropy copula
with a given diagonal section $\delta$. We recall some of their
notation in order to be able to state our result. For a diagonal
section $\delta$ with $\delta(t) < t$ for all $t \in (0,1)$, define
for $u, v \in [0,1]$, $u \le v$,
\begin{equation}
  \label{eq:cdelta_simple}
  \bar{c}_\delta(u,v) := \frac{\delta'(v)(2-\delta'(u))}
  {4\sqrt{(v-\delta(v))(u-\delta(u))}}
  \exp\left(-\frac{1}{2}\int_{u}^v \frac{1}{s-\delta(s)}\diff s\right), 
\end{equation}
and for $u \ge v$, set $\bar{c}_\delta(u,v) =
\bar{c}_\delta(v,u)$. Butucea et al.~\cite[Proposition~2.2]{ButuceaDelmasETAL2015}
show that $\bar{c}_\delta$ is the density of a symmetric copula with
diagonal section $\delta$. Note that the derivative $\delta'$ of
$\delta$ exists almost everywhere as $\delta$ is Lipschitz
continuous. For a general diagonal section $\delta$, due to its
continuity, the set $\{t \in [0,1]: \delta(t) < t\}$ is the union of
disjoint open intervals $(\alpha_j,\beta_j)$, $j \in J$ for an at
most countable index set $J$. Note that $\delta(\alpha_j)=\alpha_j$
and $\delta(\beta_j)=\beta_j$. For $u,v \in [0,1]$, define
\begin{equation}
  \label{eq:cdelta_general}
  c_\delta(u,v) := \sum_{j \in J}\frac{1}{\beta_j - \alpha_j}
  \bar{c}_{\delta_j}\left(\frac{u-\alpha_j}{\beta_j - \alpha_j},
    \frac{v-\alpha_j}{\beta_j - \alpha_j}\right)\one_{(\alpha_j,\beta_j)^2}(u,v),
\end{equation}
where $\bar{c}$ is given at \eqref{eq:cdelta_simple}, and 
\begin{displaymath}
  \delta_j(t) := \frac{\delta(\alpha_j
    + t(\beta_j-\alpha_j))-\alpha_j}{\beta_j - \alpha_j},
  \quad t \in [0,1].
\end{displaymath}
Based on the results of \cite{ButuceaDelmasETAL2015}, we arrive at
the following theorem. Recall that $H=F_2-F_1$.

\begin{theorem}
  \label{thm:maxentro}
  Let $(X_1,X_2)$ be a random vector with marginals $X_1$ and $X_2$
  satisfying $\mathbb{P}(X_1 \ge X_2) = 1$. Suppose that $X_1$ and
  $X_2$ have identical support being a (possibly unbounded) interval
  $[x^L,x^U]$, and that their cdfs $F_1$ and $F_2$ are absolutely
  continuous with densities $f_1$ and $f_2$.  If
  \begin{equation}
    \label{eq:7}
    -\int_{x^L}^{x^U}\log H(z) \diff G(z) < \infty,
  \end{equation}
  then
  \begin{displaymath}
    -\infty < \sup_F \ent(F) = \max_F \ent(F) < \infty,
  \end{displaymath}
  where the supremum is taken over all possible joint laws of
  $(X_1,X_2)$. The maximum is attained when the joint density of
  $(X_1,X_2)$ is given by
  \begin{displaymath}
    f(x_1,x_2) = 2 c_D(G(x_1),G(x_2))g(x_1)g(x_2),
    \quad x^L\leq x_2\leq x_1\leq x^U,
  \end{displaymath}
  and $c_D$ is defined at \eqref{eq:cdelta_general}. 
  If \eqref{eq:7} does not hold, then $\sup_F \ent(F)=-\infty$.
\end{theorem}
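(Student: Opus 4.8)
The plan is to reduce the statement to the known result of Butucea et al.~\cite{ButuceaDelmasETAL2015} on maximum entropy copulas with a prescribed diagonal section, combined with the decomposition \eqref{eq:HFHC}. First I would observe that by Theorem~\ref{thm:one} the search over all admissible joint laws $F$ of $(X_1,X_2)$ is exactly the search over absolutely continuous symmetric copulas $\tilde C$ with diagonal section $D = F_1 \circ G^-$; since $X_1,X_2$ have full support on $[x^L,x^U]$, the range of $G$ is all of $[0,1]$, so $D$ is a genuine diagonal section on $[0,1]$. By \eqref{eq:HFHC}, $\ent(F) = \ent(\tilde C) - \log 2 - 2\int_{x^L}^{x^U} g\log g\,\diff z$, and the additive constant involving $g$ is finite: indeed $0\le g \le \tfrac12(f_1+f_2)$ makes the negative part of $g\log g$ integrable via comparison with $f_1\log f_1, f_2\log f_2$, which are integrable because $F_1,F_2$ are absolutely continuous with support $[x^L,x^U]$ — actually I should be careful here and instead note that this constant being finite follows from the other entropies being finite, or argue it separately. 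Hence maximizing $\ent(F)$ is equivalent to maximizing $\ent(\tilde C)$ over symmetric copulas with diagonal section $D$, and the value differs only by a finite constant.

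Next I would invoke the results of \cite{ButuceaDelmasETAL2015}: the maximum entropy copula with diagonal section $D$ has density $c_D$ as defined at \eqref{eq:cdelta_general}, and it is symmetric (each $\bar c_{\delta_j}$ is symmetric by construction, and the off-diagonal blocks carry no mass), so it is an admissible competitor in our problem. The key finiteness criterion in \cite{ButuceaDelmasETAL2015} for $\ent(c_D) > -\infty$ is an integrability condition on $1/(s - \delta(s))$, or equivalently on $-\log(s-\delta(s))$, against a suitable measure. I would translate their condition into \eqref{eq:7}: with $\delta = D = F_1\circ G^-$, the change of variables $s = G(z)$ gives $s - \delta(s) = G(z) - F_1(z) = \tfrac12(F_2(z)-F_1(z)) = \tfrac12 H(z)$, and $\diff s = \diff G(z) = g(z)\,\diff z$; so the Butucea et al. condition $\int_0^1 -\log(s - D(s))\,\diff s < \infty$ (or the appropriate weighted version) becomes, up to the additive constant $\log 2$, exactly $-\int_{x^L}^{x^U} \log H(z)\,\diff G(z) < \infty$, which is \eqref{eq:7}. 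When \eqref{eq:7} holds, $\ent(c_D)$ is finite, hence both finite and attained, and pushing $c_D$ back through the map $(x_1,x_2)\mapsto(G(x_1),G(x_2))$ together with the order-statistics construction of Theorem~\ref{thm:one} yields the claimed joint density $f(x_1,x_2) = 2c_D(G(x_1),G(x_2))g(x_1)g(x_2)$ on $x_2\le x_1$, with entropy equal to the supremum. When \eqref{eq:7} fails, the Butucea et al. analysis gives $\sup \ent(\tilde C) = -\infty$ over copulas with diagonal $D$, hence $\sup_F\ent(F) = -\infty$.

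The main obstacle is the precise matching of the finiteness/integrability hypotheses: \cite{ButuceaDelmasETAL2015} work with a general diagonal section $\delta$ whose zero-set $\{\delta(t)=t\}$ decomposes into the intervals $(\alpha_j,\beta_j)$, and their exact condition for a finite-entropy maximizer (as opposed to a merely well-defined density $c_\delta$) is stated in terms of a sum over $j$ of integrals like $\int_{\alpha_j}^{\beta_j} \log\big((s-\alpha_j)(\beta_j-s)/(s-\delta(s))\big)\,\diff s$ or similar; I need to verify that, after the substitution $s = G(z)$, this aggregates precisely to the single integral $-\int \log H\,\diff G$ up to finite constants, handling the normalizing factors $1/(\beta_j-\alpha_j)$ and the rescaling $\delta_j$ correctly. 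A secondary, more routine, point is confirming that the constant $-2\int_{x^L}^{x^U} g\log g\,\diff z$ in \eqref{eq:HFHC} is finite under the stated hypotheses — this can be dispatched by noting that $g\log g$ has integrable negative part since $g$ is a probability density on a (possibly infinite) interval with $\int_{\{g\le 1\}} -g\log g \le \int_{\{g\le 1\}} (1-g) \le 1$ when the interval is bounded, and more care (or the condition \eqref{eq:7} itself, which controls the tails via $H \le 2\min(F_2, 1-F_1)$) when it is unbounded; in any case this constant does not affect where the supremum is attained.

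Finally, I would assemble these pieces: equivalence of the two optimization problems (via Theorem~\ref{thm:one} and \eqref{eq:HFHC}), admissibility of the pushed-forward $c_D$-copula, the dichotomy governed by \eqref{eq:7} inherited from \cite{ButuceaDelmasETAL2015}, and the explicit form of the maximizer. I expect the write-up to be short modulo the bookkeeping in the change of variables, and I would state explicitly which proposition of \cite{ButuceaDelmasETAL2015} supplies the maximality and the finiteness dichotomy so that the argument is self-contained given that reference.
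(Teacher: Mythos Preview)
Your proposal is correct and follows essentially the same route as the paper: reduce via Theorem~\ref{thm:one} and \eqref{eq:HFHC} to maximizing $\ent(\tilde C)$ over symmetric copulas with diagonal section $D$, perform the substitution $t=G(z)$ to translate the finiteness condition of \cite{ButuceaDelmasETAL2015} into \eqref{eq:7}, and then cite their Theorem~2.5. The paper's proof is more terse than yours---it does not separately address the finiteness of $\int g\log g$ nor the block structure in $c_D$, simply invoking \eqref{eq:HFHC} and the cited theorem---so the concerns you raise are places where you are being more careful than the published argument, not points of divergence in strategy.
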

\begin{proof}[\textnormal{\textbf{Proof}}]
  By substitution, 
  \begin{displaymath}
    -\int_0^1 \log(t-D(t))\diff t
    = -\int_{x^L}^{x^U}\log \frac{H(z)}{2}\diff G(z).
  \end{displaymath}
  The result now follows from \cite[Th.~2.5]{ButuceaDelmasETAL2015}
  in combination with \eqref{eq:HFHC} and Theorem~\ref{thm:one}.
\end{proof}

The condition $D(t) < t$ for all $t \in (0,1)$ is equivalent to
$F_2(z) > F_1(z)$ for all $z \in (x^L,x^U)$. If this condition holds,
then $c_D = \bar c_D$ and the formula for the entropy maximizing $f$
in Theorem~\ref{thm:maxentro} simplifies to
\begin{displaymath}
  f(x_1,x_2) = \frac{f_1(x_1)f_2(x_2)}{\sqrt{H(x_1){H(x_2)}}}
  \exp\left(-\int_{x_2}^{x_1}\frac{1}{H(s)}\diff G(s)\right)
\end{displaymath}
for $x_1,x_2 \in [x^L,x^U]$, $x_1 \ge x_2$.

\section{Examples}
\label{sec:examples}

\begin{example}\label{ex:6n1}
  Let $X_2$ be uniformly distributed on $[0,1]$, and let $X_1$ be distributed as
  the maximum of $X_2$ and another independent uniformly distributed
  random variable, that is, $F_1(x)=x^2$. In this case $H(s)=s-s^2$ is
  unimodal with the maximum at $r=1/2$, and $t(s)=1-s$ for
  $s\in(r,1]$. The top-left panel of Fig.~\ref{fig:ex} shows a sample from
  the distribution $L$. It is easily seen that these values belong to the
  set $A$ given by \eqref{eq:9} which consists here of the diagonal of the square $[0,1]^2$ and the lower part of the off-diagonal.

  \begin{figure}[htbp]
    \centering
    \includegraphics[width=0.45\textwidth]{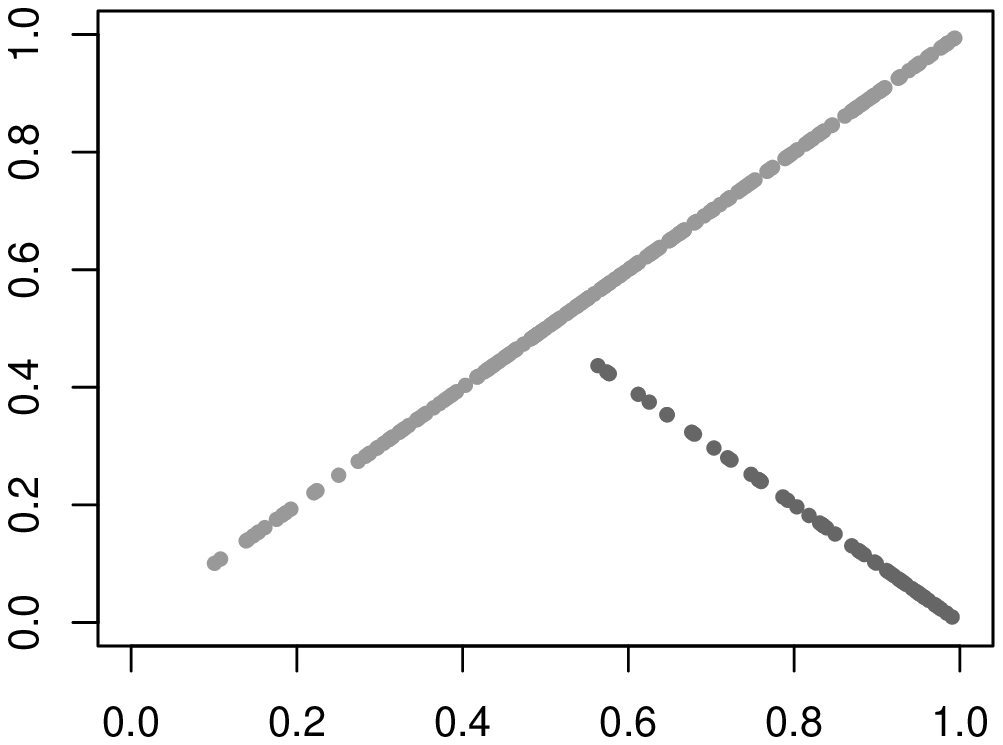} \includegraphics[width=0.45\textwidth]{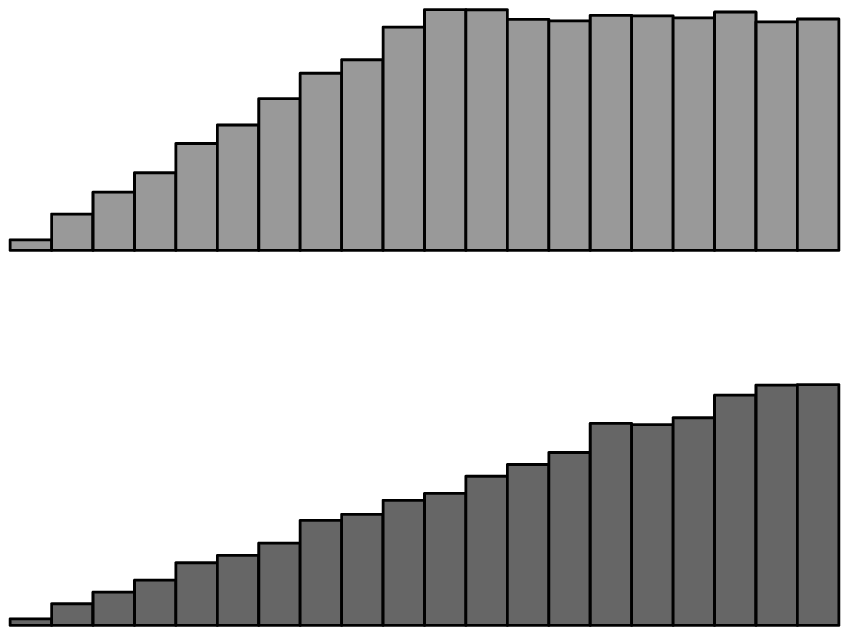}
     \includegraphics[width=0.45\textwidth]{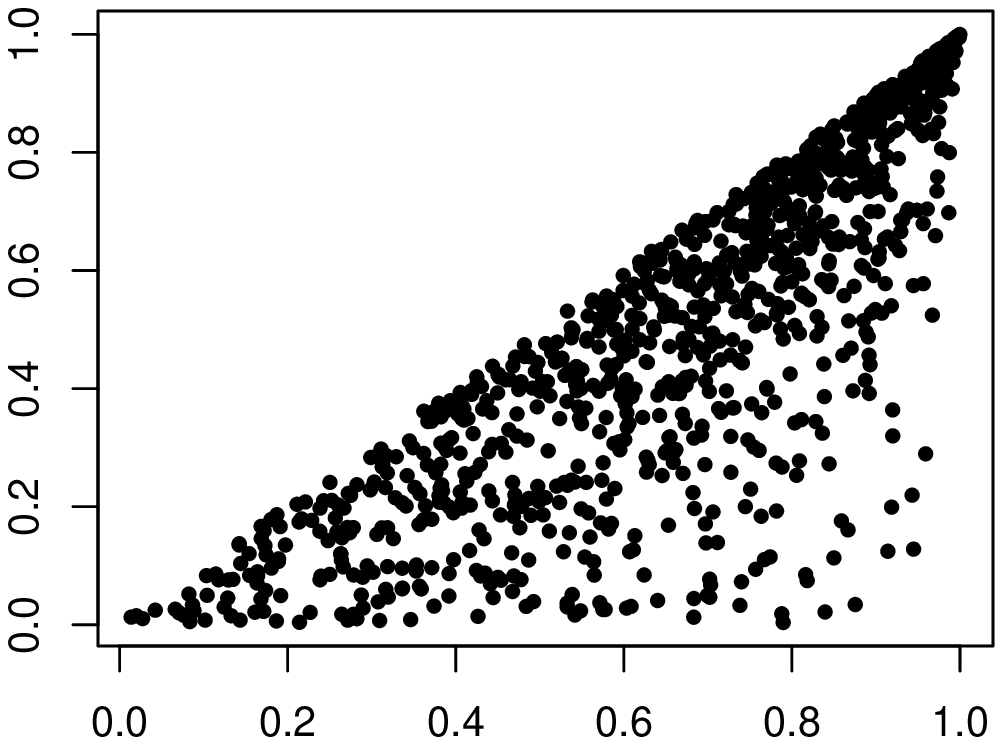}
    \caption{Top-left panel: Sample of size $n=300$ of points (dark and light gray) from the maximally independent joint
      distribution $L$ for Example~\ref{ex:6n1}; Top-right panel: Histograms of two subsamples of a sample of $L$ of size $n=50000$ depending on whether the points are on the diagonal (light gray) or the off-diagonal (dark gray); Bottom panel: Sample of size $n=1000$ of points from
    the entropy maximizing distribution in
    Example~\ref{ex:6n1}. \label{fig:ex}}
  \end{figure}
  
  The smallest values for Kendall's tau and Spearman's
  rho are $1/3$ and $1/4$, respectively. The joint density with the
  maximal entropy is given by 
  \begin{equation}\label{eq:maxe61}
    f(x_1,x_2)=\frac{2(1-x_1)}{(1-x_2)^2},\quad 0\leq x_2\leq x_1\leq 1.
  \end{equation}
  A sample from this distribution is shown in the bottom panel of Fig.~\ref{fig:ex}. Note that in this example it is easy to simulate from the distribution $L$, and also from the distribution with density $f$ given at \eqref{eq:maxe61}. To simulate a random vector $(X_1,X_2)$ with distribution $L$, generate a random variable $U$ which is uniformly distributed on $[0,1]$ and set
\[
(X_1,X_2) = \begin{cases} (\sqrt{U},\sqrt{U}), & U \le 1/4,\\
((1 + \sqrt{4U - 1})/2,(1 - \sqrt{4U - 1})/2), & 1/4 < U < 1/2,\\
(U,U), & U \ge 1/2.\end{cases}
\]
A random vector $(X_1,X_2)$ with distribution given by the density $f$ at \eqref{eq:maxe61} is obtained by simulating independent random variables $U,V$ both uniformly distributed on $[0,1]$ and defining
\[
(X_1,X_2) = (1 - \sqrt{V}(1-U),U).
\]
\end{example}

\begin{example}
  Theorem~\ref{thr:order} establishes a relationship between the
  distribution of $(X_1,X_2)$ and the order statistics of a suitably
  chosen exchangeable pair $(V_1,V_2)$. Assume that $V_1$ and $V_2$ are
  independent. Then $\prob(V_1\leq x,V_2\leq x)=G(x)^2$. By
  Theorem~\ref{thr:order}, $F_1(x)=G(x)^2$, whence 
  \begin{displaymath}
    F_2(x)=2\sqrt{F_1(x)}-F_1(x), \quad x\in\R. 
  \end{displaymath}
  Then 
  \begin{displaymath}
    H(x)=2\sqrt{F_1(x)}(1-\sqrt{F_1(x)})
  \end{displaymath}
  is always unimodal. If $F_1$ is continuous the maximum attained at any lower quartile
  of $X_1$. The smallest possible Kendall's tau equals $-1/3$; it does
  not depend on $F_1$.  If we assume additionally that the support of $F_1$ is an interval, Theorem \ref{thm:SpearmanW} applies and the equation used to find $t(s)$ turns into
  $\sqrt{F_1(t(s))}=1-\sqrt{F_1(s)}$. Substituting this in
  \eqref{eq:sp-rho} yields that Spearman's rho equals $-1/2$ for all
  $F_1$.  If $F_1$ is absolutely continuous with density $f_1$ the maximum entropy is attained on the density
  $f(x_1,x_2)=2g(x_1)g(x_2)$, $x_1\geq x_2$, where
  $g(x)=f_1(x)/(2\sqrt{F_1(x)})$.
\end{example}

\begin{example}
  \label{ex:6.1}
  Let $X_1$ be uniform on $[0,1]$, and let $X_2=X_1^{1/\alpha}$ with
  $\alpha\in(0,1]$. Then $F_1(x)=x$, $F_2(x)=x^\alpha$, and 
  \begin{displaymath}
    F(x_1,x_2)=2\tilde{C}((x_1+x_1^\alpha)/2,(x_2+x_2^\alpha)/2)-x_2
  \end{displaymath}
  for $x_1\geq x_2$, see \eqref{eq:1}. In this case, \eqref{eq:5}
  yields that 
  \begin{displaymath}
    L(x_1,x_2) =
    \begin{cases}
      x_1, & x_1\leq x_2,\\
      x_2^\alpha-\min\{x_2^\alpha-x_2,x_1^\alpha-x_1\}, &
      \text{otherwise},
    \end{cases}
  \end{displaymath}
  The function $F_2-F_1$ is unimodal and attains its maximum at
  $r=\alpha^{(1-\alpha)^{-1}}$. The smallest Kendall's tau is
  \begin{displaymath}
    \tau=3-4 \;\E X_1^\alpha=\frac{3\alpha-1}{1+\alpha}.
  \end{displaymath}
  Note that $\tau=1$ if $\alpha=1$, $\tau=0$ if $\alpha=1/3$, and
  $\tau\to-1$ as $\alpha\downarrow0$. 

  We are not able to provide an explicit formula for Spearman's rho in
  terms of $\alpha$ but Fig.~\ref{fig:1} shows $\tau$ and $\rho$ as
  a function of $\alpha$.
\end{example}

\begin{figure}
\begin{center}
\includegraphics[width=0.5\textwidth]{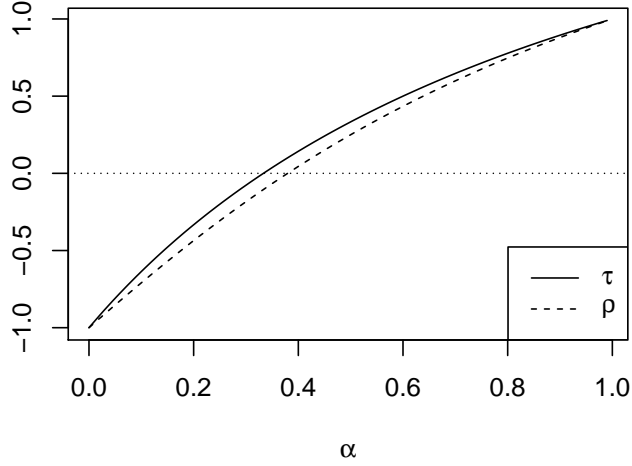}
\caption{Kendall's tau $\tau$ and Spearman's rho $\rho$ as functions
  of $\alpha \in (0,1]$ for the distributions in
  Example~\ref{ex:6.1}.\label{fig:1}}
\end{center}
\end{figure}

\begin{example}[Unimodal densities]
  \label{ex:6.2}
  Let $Z$ be a random variable with cdf $F$ and unimodal density $f$
  whose support is $\R$. Let $X_i=Z+\mu_i$, $i=1,2$,
  with $\mu_1\geq \mu_2$, hence $F_i = F(\cdot-\mu_i)$, $i\in\{1,2\}$. Then
  Theorem~\ref{thm:KendallW} yields for Kendall's tau of the
  distribution $L$ at \eqref{eq:5}
    \begin{displaymath}
    \tau=3-4\;\E F(Z+\mu_1-\mu_2)=4\int \check{F}(\mu_2-\mu_1-z)\diff F(z) -1,
  \end{displaymath}
  which is the convolution of $\check{F} (z):=1-F(-z)$ and $F$.  
  
  Let us additionally assume that $f$ is symmetric about its mode at
  zero. Then $\check{F} = F$, and the unimodal function $F_2-F_1$ has
  its maximum at $r=(\mu_1 + \mu_2)/2$. Hence, the function
  $t:(r,\infty) \to (-\infty,r)$ in Theorem \ref{thm:SpearmanW} is given by $t(s) = \mu_2 + \mu_1 -
  s$. If $F = \Phi$ is the standard Gaussian cdf, then $\Phi * \Phi(x)
  = \Phi(x/\sqrt{2})$, and, therefore, 
  \begin{displaymath}
    \tau = 4 \Phi((\mu_2-\mu_1)/\sqrt{2}) - 1.
  \end{displaymath}
  In particular, $\tau = 1$ if $\mu_1 = \mu_2$, $\tau = 0$ if $\mu_1 -
  \mu_2 = \sqrt{2}\Phi^{-1}(3/4) \approx 0.954$, and if $\mu_1 - \mu_2
  \to \infty$, then $\tau \to -1$.
  
  For Spearman's rho, we numerically computed the integrals in
  \eqref{eq:sp-rho} for $F = \Phi$. The values of $\tau$ and $\rho$ as
  functions of $\mu_1 - \mu_2 \ge 0$ are displayed in Fig.~\ref{fig:2}.
\end{example}

\begin{figure}
\begin{center}
\includegraphics[width=0.5\textwidth]{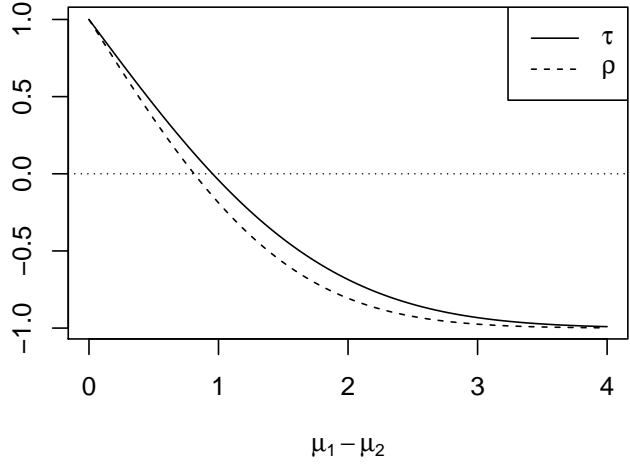}
\caption{Kendall's tau $\tau$ and Spearman's rho $\rho$ as functions
  of $\mu_1 - \mu_2 \in [0,4]$ for $F = \Phi$ in
  Example~\ref{ex:6.2}.\label{fig:2}} 
\end{center}
\end{figure}

\begin{example}[Exponential marginal distributions]
  \label{ex:6.3}
  Let $Z$ be an exponential random variable with cdf $F(x)=1-e^{-x}$,
  and let $X_i=Z/\lambda_i$, $i\in\{1,2\}$, with $\lambda_1\le
  \lambda_2$. Considering $\log(X_i)$, $i\in\{1,2\}$ shows that we
  are in the same setting as in Example~\ref{ex:6.2}
  because Kendall's tau and Spearman's rho are invariant under
  monotone transformations of the marginals and the stochastic
  ordering is preserved if we transform both marginals with the same
  increasing function.
  
  However, we can also compute $\tau$ and $\rho$ directly. Kendall's tau is given by
  \begin{displaymath}
    \tau = 4\int_0^1 (1-u)^{\lambda_2/\lambda_1} \diff u  - 1
    = \frac{3\lambda_1-\lambda_2}{\lambda_1 + \lambda_2}.
  \end{displaymath}
  Note that $\tau=1$ if $\lambda_1=\lambda_2$, $\tau = 0$ if
  $\lambda_1/\lambda_2 = 1/3$, and $\tau \to -1$ as
  $\lambda_1/\lambda_2 \downarrow 0$.

  The function
  \begin{displaymath}
    F_2(x)-F_1(x)=F(\lambda_2 x)-F(\lambda_1 x)=e^{-\lambda_1
      x}-e^{-\lambda_2 x}
  \end{displaymath}
  is unimodal on $[0,\infty)$ with maximum at 
  \begin{displaymath}
    r = (\log\lambda_2-\log\lambda_1)/(\lambda_2 - \lambda_1).
  \end{displaymath}
  Therefore, we can use \eqref{eq:sp-rho} to compute
  $\rho$. Considering the increasing transformation $\lambda_1 X_i$,
  $i\in\{1,2\}$, we see that $\tau$ and $\rho$ only depend on
  $\lambda_1/\lambda_2 \le 1$. Fig.~\ref{fig:3} provides plots of
  $\tau$ and $\rho$ as functions of $\lambda_1/\lambda_2 \in (0,1]$.
  \end{example}

\begin{figure}
\begin{center}
\includegraphics[width=0.5\textwidth]{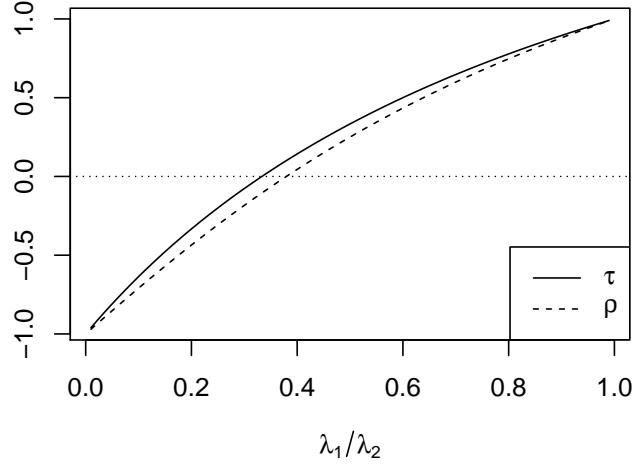}
\caption{Kendall's tau $\tau$ and Spearman's rho $\rho$ in terms of $\lambda_1/\lambda_2$ for exponentially distributed marginal distributions as in Example \ref{ex:6.3}.\label{fig:3}}
\end{center}
\end{figure}

\section{Generalizations}
\label{sec:generalizations}

A multivariate version of Theorem~\ref{thr:order} is the following.

\begin{theorem}
  \label{thr:orderM}
  A random vector $(X_1,\dots,X_n)$ with marginal cdfs $F_1,\dots,F_n$
  satisfies $\prob(X_1\geq \dots \geq X_n)=1$ if and only if $X_{i} =
  V_{(i)}$ where $V_{(1)} \ge \dots \ge V_{(n)}$ are the order
  statistics of a random vector $(V_1,\dots,V_n)$ with exchangeable
  components and such that for $j =1,\dots,n$
  \[
  \prob(V_1\leq x,\dots,V_j\leq x)
  = \frac{1}{\binom{n}{j}}\sum_{\ell=1}^{n-j+1}\binom{n-\ell}{j-1}F_\ell(x).
  \]
\end{theorem}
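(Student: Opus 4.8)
The plan is to mimic the structure of the proof of Theorem~\ref{thr:order}, establishing the two implications separately, with the bookkeeping of which marginal appears where being the only real complication. First I would prove necessity. Given $(X_1,\dots,X_n)$ with $\prob(X_1 \ge \dots \ge X_n)=1$, let $\Sigma$ be a uniformly random permutation of $\{1,\dots,n\}$, independent of everything, and set $(V_1,\dots,V_n) = (X_{\Sigma(1)},\dots,X_{\Sigma(n)})$. Then $(V_1,\dots,V_n)$ is exchangeable by construction, and since $X_1 \ge \dots \ge X_n$ a.s., the order statistics satisfy $V_{(i)} = X_i$ for all $i$. It remains to compute $\prob(V_1 \le x, \dots, V_j \le x)$. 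The event $\{V_1 \le x,\dots,V_j \le x\}$ says that the $j$ coordinates selected by $\Sigma$ all fall in $(-\infty,x]$; since $\{X_k \le x\}$ is determined by whether $k \ge K(x)$ where $K(x)$ is the (random) number of $X_\ell$'s exceeding $x$, conditioning on $\Sigma$ and summing gives
\[
\prob(V_1 \le x,\dots,V_j \le x) = \sum_{m=j}^{n} \prob\big(\#\{\ell : X_\ell \le x\} = m\big)\cdot \frac{\binom{m}{j}}{\binom{n}{j}}.
\]
Now I would rewrite $\prob(\#\{\ell : X_\ell \le x\} = m)$. Because the $X_\ell$ are almost surely ordered, $\#\{\ell : X_\ell \le x\} = m$ exactly when $X_{n-m} > x \ge X_{n-m+1}$ (with the usual conventions $X_0 = +\infty$, $X_{n+1} = -\infty$), so this probability telescopes: $\prob(\#\{\ell : X_\ell \le x\} \ge m) = \prob(X_{n-m+1} \le x) = F_{n-m+1}(x)$. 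Substituting $F_\ell$ for $F_{n-m+1}$ (i.e., $\ell = n-m+1$, so $m = n-\ell+1$, $m$ ranging over $j,\dots,n$ corresponds to $\ell$ ranging over $1,\dots,n-j+1$) and using Abel summation / the identity $\sum_m \binom{m}{j}\,[\,\text{telescoped increments}\,] = \sum_\ell \binom{n-\ell}{j-1} F_\ell(x)$ after reindexing, one recovers the stated formula; the combinatorial identity $\binom{m}{j} - \binom{m-1}{j} = \binom{m-1}{j-1}$ is what converts the telescoping sum of $F_\ell$'s into the claimed weights $\binom{n-\ell}{j-1}/\binom{n}{j}$.

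For sufficiency, suppose $(V_1,\dots,V_n)$ is exchangeable with $\prob(V_1 \le x,\dots,V_j\le x)$ equal to the right-hand side for each $j$, and define $X_i := V_{(i)}$. Then $X_1 \ge \dots \ge X_n$ a.s.\ is automatic. I must check that $X_i$ has cdf $F_i$. By the same telescoping identity as above (now read in reverse), $\prob(X_i \le x) = \prob(\#\{\ell : V_\ell \le x\} \ge n-i+1)$, and the distribution of $\#\{\ell : V_\ell \le x\}$ is recovered from the family $\{\prob(V_1\le x,\dots,V_j \le x)\}_{j=0}^n$ by inclusion-exclusion (exchangeability means $\prob(\text{any } j \text{ specified } V\text{'s} \le x) = \prob(V_1 \le x,\dots,V_j \le x)$, so the moments of $\#\{\ell : V_\ell \le x\}$ — equivalently its full law — are determined). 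Inverting the linear relation between $(\prob(V_1\le x,\dots,V_j\le x))_j$ and $(\prob(X_i \le x))_i$, which is exactly the matrix whose rows are the weights $\binom{n-\ell}{j-1}/\binom{n}{j}$, and verifying it sends the given right-hand side back to $F_i(x)$, completes the argument. It is convenient to note that this linear map is triangular-ish and invertible once we know it is correct "one direction", so proving sufficiency essentially reduces to the algebraic verification already done in the necessity part.

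The main obstacle is purely combinatorial: cleanly establishing the identity
\[
\prob(V_1 \le x,\dots,V_j\le x) = \binom{n}{j}^{-1}\sum_{\ell=1}^{n-j+1}\binom{n-\ell}{j-1}F_\ell(x)
\]
as an equivalent reformulation of "$\prob(X_i \le x) = F_i(x)$ for all $i$, where $X_i = V_{(i)}$", for an exchangeable $V$. I would isolate this as a standalone lemma about exchangeable vectors: for exchangeable $(V_1,\dots,V_n)$ with $N(x) := \#\{\ell: V_\ell \le x\}$, one has $\prob(V_1\le x,\dots,V_j\le x) = \E\big[\binom{N(x)}{j}\big]/\binom{n}{j}$ and $\prob(V_{(i)} \le x) = \prob(N(x) \ge n-i+1)$; then the desired formula is the statement that $\E[\binom{N}{j}]/\binom{n}{j} = \binom{n}{j}^{-1}\sum_{\ell}\binom{n-\ell}{j-1}\prob(N \ge n-\ell+1)$, which follows from $\binom{m}{j} = \sum_{k \ge 1}\binom{k-1}{j-1}\one\{k \le m\}$ applied with $m = N$ and then taking expectations. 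Once this lemma is in place, both directions of the theorem are immediate, exactly as in Theorem~\ref{thr:order}. The case $n=2$ should be checked to match \eqref{eq:2}: there $j=1$ gives $\binom{2}{1}^{-1}(\binom{1}{0}F_1 + \binom{0}{0}F_2) = (F_1+F_2)/2 = G$, and $j=2$ gives $\binom{2}{2}^{-1}\binom{0}{1}F_1 = F_1$ (empty-ish sum collapsing to the single $\ell=1$ term with $\binom{1}{1}$; a quick index check confirms consistency), recovering Theorem~\ref{thr:order}.
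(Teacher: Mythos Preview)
Your proposal is correct and follows the same overall architecture as the paper's proof: necessity via a uniform random permutation of $(X_1,\dots,X_n)$, sufficiency by checking that $V_{(i)}$ has cdf $F_i$, with the whole argument hinging on the combinatorial equivalence between the family $\big(\prob(V_1\le x,\dots,V_j\le x)\big)_j$ and the family $\big(\prob(V_{(i)}\le x)\big)_i$ for exchangeable $V$.

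Where you differ is in how you establish that equivalence. The paper quotes an inclusion--exclusion formula from \cite{Lange2010} expressing $\prob(V_{(i)}\le x)$ as an alternating sum in the $\prob(V_1\le x,\dots,V_j\le x)$, substitutes the claimed right-hand side, and collapses the double sum algebraically. You instead introduce $N(x)=\#\{\ell:V_\ell\le x\}$, observe that $\prob(V_1\le x,\dots,V_j\le x)=\E\binom{N(x)}{j}/\binom{n}{j}$ and $\prob(V_{(i)}\le x)=\prob(N(x)\ge n-i+1)$, and then use the hockey-stick identity $\binom{m}{j}=\sum_{k=1}^{m}\binom{k-1}{j-1}$ to pass between them. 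Your route avoids alternating signs entirely and is self-contained (no external citation), at the cost of introducing the auxiliary object $N(x)$; the paper's route is terser but leans on the reference. Both are equally valid; your lemma formulation makes the bijectivity of the linear map transparent, so sufficiency and necessity really do reduce to the same computation, as you note. One small slip: in your $n=2$, $j=2$ sanity check the coefficient is $\binom{n-\ell}{j-1}=\binom{1}{1}$ at $\ell=1$, not $\binom{0}{1}$; you catch this yourself in the parenthetical, but it should be cleaned up in a final write-up.
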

\begin{proof}[\textnormal{\textbf{Proof}}]
  Let $(V_1,\dots,V_n)$ be an exchangeable random vector that
  satisfies the above condition. By \cite[Proposition 4.4.1]{Lange2010}, we
  have that
  \begin{align*}
    \prob(V_{(i)} \le x)
    &= \sum_{j=n-i+1}^n (-1)^{j-(n-i+1)}\binom{j-1}{n-i}\binom{n}{j}
    \prob(V_1\leq x,\dots,V_{j}\leq x)\\
    &= \sum_{j=n-i+1}^n (-1)^{j-(n-i+1)}\binom{j-1}{n-i}
    \sum_{\ell=1}^{n-j+1}\binom{n-\ell}{j-1}F_\ell(x)\\
    &= \sum_{\ell=1}^i F_{\ell}(x) \binom{n-\ell}{n-i}
    \sum_{j=n-i+1}^{n-\ell+1}(-1)^{j-(n-i+1)}\binom{i-\ell}{n-\ell-j+1}\\ &= F_i(x).
  \end{align*}
  Conversely, if the vector $(V_1,\dots,V_n)$ is obtained as the random
  permutation of $(X_1,\dots,X_n)$, then it is exchangeable and the
  formula for $\prob(V_1\leq x,\dots,V_j\leq x)$ is essentially the  
  inversion of the first equality in the above equation.
\end{proof}

A variant of Theorem~\ref{thr:order} applies to random elements in
a lattice $E$ with partial order $\preceq$, and with $\vee$ being the
maximum and $\wedge$ being the minimum operation. Endow $E$ with the
$\sigma$-algebra generated by $\{y:\; y\preceq x\}$ for all $x\in
E$. Since these events form a $\pi$-system, the values $\prob(X\preceq
x)$, $x\in E$, uniquely determine the distribution of an $E$-valued
random element $X$.  

In this case, Theorem~\ref{thr:order} admits a direct generalization. 
Namely $X_1\preceq X_2$ a.s.~if and only if $X_1=V_1\vee V_2$ and
$X_2=V_1\wedge V_2$ for a pair $(V_1,V_2)$ of exchangeable random
elements in $E$ such that 
\begin{displaymath}
  \prob(V_i\preceq
  x)=\frac{1}{2}\Big(\prob(X_1\preceq x)+\prob(X_2\preceq x)\Big)
\end{displaymath}
and
$\prob(V_1\preceq x,V_2\preceq x)=\prob(X_1\preceq x)$.

\section*{Acknowledgement}

The problem of characterizing bivariate copulas with stochastically
ordered marginals was brought to the attention of the second author by
Nicholas Kiefer from the Economics Department at Cornell University. 

The authors are grateful to the referees for spotting mistakes in the
original version of this paper and suggesting numerous improvements.

\bibliographystyle{elsarticle-num.bst}
\bibliography{biblio}

\begin{thebibliography}{10}
\expandafter\ifx\csname url\endcsname\relax
  \def\url#1{\texttt{#1}}\fi
\expandafter\ifx\csname urlprefix\endcsname\relax\def\urlprefix{URL }\fi
\expandafter\ifx\csname href\endcsname\relax
  \def\href#1#2{#2} \def\path#1{#1}\fi

\bibitem{nav:spiz10}
J.~Navarro, F.~Spizzichino, On the relationships between copulas of order
  statistics and marginal distributions, Statist. Probab. Lett. 80 (2010)
  473--479.

\bibitem{leb:dut14}
R.~Lebrun, A.~Dutfoy, Copulas for order statistics with prescribed margins, J.
  Multivariate Anal. 128 (2014) 120--133.

\bibitem{diet:fuc:sch16}
M.~Dietz, S.~Fuchs, K.~D. Schmidt, On order statistics and their copulas,
  Statist. Probab. Lett. 117 (2016) 165--172.

\bibitem{DuranteMesiarETAL2005}
F.~Durante, R.~Mesiar, C.~Sempi, Copulas with given diagonal section: some new
  results, in: Proc. of the Joint 4th Conf. of the European Society for Fuzzy
  Logic and Technology and the 11th Rencontres Francophones sur la Logique
  Floue et ses Applications, 2005, pp. 931--936.

\bibitem{EmbrechtsHofert2013}
P.~Embrechts, M.~Hofert, A note on generalized inverses, Math. Method. Oper.
  Res. 77 (2013) 423--432.

\bibitem{SchweizerSklar1983}
B.~Schweizer, A.~Sklar, Probabilistic Metric Spaces, North-Holland Publishing
  Co., New York, 1983.

\bibitem{McNeilNeslehova2009}
A.~McNeil, J.~Ne\v{s}lehov\'a, Multivariate archimedean copulas, $d$-monotone
  functions and $l_1$-norm symmetric distributions, Ann. Stat. 37 (2009)
  3059--3097.

\bibitem{nel:ques:rod:04}
R.~B. Nelsen, J.~J. Quesada~Molina, J.~A. Rodr\'{\i}guez~Lallena, M.~\'{U}beda
  Flores, Best-possible bounds on sets of bivariate distribution functions, J.
  Multivariate Anal. 90 (2004) 348--358.

\bibitem{KlementKolesarova2005}
E.~P. Klement, A.~Koles\'arov\'a, Extensions to copulas and quasi-copulas as
  special 1-{L}ipschitz aggregation operators, Kybernetika 41 (2005) 329--348.

\bibitem{FredricksNelsen2002}
G.~A. Fredricks, R.~B. Nelsen, The {B}ertino family of copulas, in: C.~M.
  Cuadras, J.~Fortiana, J.~A. Rodr\'{i}guez-Lallena (Eds.), Distributions With
  Given Marginals and Statistical Modelling, Springer, Dordrecht, 2002.

\bibitem{rog99}
L.~C.~G. Rogers, Fastest coupling of random walks, J. London Math. Soc. (2) 60
  (1999) 630--640.

\bibitem{rac:rus98}
S.~T. Rachev, L.~R\"{u}schendorf, Mass Transportation Problems. {V}ol. {II},
  Springer-Verlag, New York, 1998.

\bibitem{Neslehova2007}
J.~Ne{\v{s}}lehov{\'a}, On rank correlation measures for non-continuous random
  variables, J. Multivariate Anal. 98 (2007) 544--567.

\bibitem{cov:thom06}
T.~M. Cover, J.~A. Thomas, Elements of Information Theory, 2nd Edition,
  Wiley-Interscience, Hoboken, NJ, 2006.

\bibitem{ButuceaDelmasETAL2018}
C.~Butucea, J.-F. Delmas, A.~Dutfoy, R.~Fischer, Maximum entropy distribution
  of order statistics with given marginals, Bernoulli 24 (2018) 115--155.

\bibitem{ButuceaDelmasETAL2015}
C.~Butucea, J.-F. Delmas, A.~Dutfoy, R.~Fischer, Maximum entropy copula with
  given diagonal section, J. Multivariate Anal. 137 (2015) 61--81.

\bibitem{Lange2010}
K.~Lange, Applied Probability, Springer, New York, 2010.

\end{thebibliography}

\end{document}